 \newtheorem{thm}{Theorem}[section]
 \newtheorem{cor}[thm]{Corollary}
 \newtheorem{lem}[thm]{Lemma}
 \numberwithin{equation}{section}
 \newcommand{\RM}{\mathbb{R}}
 \newcommand{\bv}{\mathbf{v}}
 \newcommand{\bx}{\mathbf{x}}
 \newcommand{\bn}{\mathbf{n}}
 \newcommand{\NM}{\mathbb{N}}
 \newcommand{\ZM}{\mathbb{Z}}
 \newcommand{\CM}{\mathbb{C}}
 \newcommand{\curl}{\operatorname{curl}}
 \newcommand{\ds}{\operatorname{ds}}
 \newcommand{\dlambda}{\operatorname{d\lambda}} 
 \newcommand{\sign}{\operatorname{sign}}
 \newcommand{\dx}{\operatorname{d\mathbf{x}}}
\begin{document}

\title{Associated Weber-Orr transform, Biot-Savart Law and explicit solution of 2D Stokes system in exterior of the disc.}


\maketitle

\begin{abstract}
In this article we derive the explicit solution of 2-D Stokes system in exterior of the disc with no-slip condition on inner boundary and given velocity $\bv_\infty$ at infinity. It turned out it is the first application of the associated Weber-Orr transform to mathematical physics in comparison to classical Weber-Orr transform which is used in many researches. From no-slip condition for velocity field we will obtain Robin-type boundary condition for vorticity. Then the initial-boundary value problem for vorticity will be solved with help of the associated Weber-Orr transform. Also the explicit formula of Biot-Savart Law in polar coordinates will be given.

Primary 76D07; Secondary 33C10.
\end{abstract}

\tableofcontents

\section*{Preliminary}
The exterior domain is the most natural area for study the Stokes flow past an obstacles. The advantage of the planar flow of the incompressible fluid is the fact that the motion equations  are reduced to only one vorticity equation. It works well in the case of Cauchy problem when the domain is the hole space $\RM^2$. But in exterior domains when the fluid interacts with the object by no-slip condition, then the Dirichlet boundary condition for velocity transforms to integral relations deduced from Biot-Savart law which describes the inverse of the curl operator and has rather complicated form. In this article we will show that for cylindrical domain $B_{r_0}=\{\bx \in \RM^2,~|\bx|\geq r_0 \},~r_0>0$ the no-slip boundary condition really can be transformed to another boundary condition for vorticity. 

The reasoning question is how the integral relation can be transformed to boundary ones since the Biot-Savart law doesn't admit integration by explicit formulas. The matter of fact that this boundary condition is given in terms of Fourier coefficients $w_k(t,r)$ of vorticity function $w(t,\bx)$  and these coefficients itself contain integration by polar angle. Since the Biot-Savart law is the convolution of the vortex fundamental solution with velocity, then the Fourier decomposition breaks the integral relations into series of Robin-type boundary conditions for $w_k(t, r)$ of the type
\begin{equation}\label{int:robin_bound}
r_0\frac{\partial w_k(t,r)}{\partial r}\Big|_{r=r_0} + |k| w_k(t,r_0) = 0,~k \in \ZM.
\end{equation}
And this relations define the no-slip boundary condition $\bv(t,\bx)\Big|_{|\bx|=r_0}=0$ in terms of vorticity function for Stokes flow with prescribed velocity $\bv_\infty$ at infinity. Here we suppose that initial data satisfies boundary conditions.
 
In this article we will find explicit formula for solution of the non-stationary Stokes system in exterior of the disc $B_{r_0}$ with given velocity value at infinity and no-slip  condition at the boundary. With help of vorticity operator we reduce Stokes system to heat equation with zero Robin-type boundary condition (\ref{int:robin_bound}) and zero boundary condition at infinity.
 
The initial boundary value problem for the heat equation in exterior of the disc with various types of boundary conditions could be solved with help of the Weber-Orr transform. In general form it can be written as
\begin{equation}\label{int:weberorr}
W_{k,l}[f](\lambda) = \int_{r_0}^\infty R_{k,l}(\lambda,s) f(s) s \ds,~k,l\in \RM
\end{equation}
where
$$
R_{k,l}(\lambda,s) = J_{k}(\lambda s)Y_{l}(\lambda r_0) - Y_{k}(\lambda s)J_{l}(\lambda r_0), 
$$
$J_k(r)$, $Y_k(r)$ - are the Bessel functions of the first and second type (see \cite{BE} \cite{W}).

\vskip 5pt
The inverse transform is defined by the formula

\begin{equation}\label{int:weberorrinv}
W^{-1}_{k,l}[\hat f](r) = \int_{0}^\infty \frac{R_{k,l}(\lambda,r)}{{J_{l}^2(\lambda r_0) + Y_{l}^2(\lambda r_0)}} \hat f (\lambda) \lambda \dlambda.
\end{equation}

Since $J_{k}$, $Y_{k}$  satisfy Bessel equation then the terms $e^{-\lambda^2 t + ik\varphi} J_{k}(\lambda r)$,\\ $e^{-\lambda^2 t + ik\varphi} Y_{k}(\lambda r)$ satisfy heat equation, and 
\begin{equation}\label{int:explsol}
w_k(t,r) = W^{-1}_{k,l} \left [ e^{-\lambda^2 t} W_{k,l} [w_k(0,r)](\lambda) \right ](t,r)
\end{equation}
multiplied by $e^{ik\varphi}$ do the same.

In order to satisfy the initial datum $w_k(0,r)$ at $t=0$ we need invertibility of $W_{k,l}$ and $W^{-1}_{k,l}$. When $l=k$ the above formulas are referred to classical Weber-Orr transform. Titchmarsh\cite{Tm}\cite{T} proved invertibility of this transform for functions $f(r)$ : $f(r) \sqrt r \in L_1(r_0,\infty)$ with bounded variations in the following form:
\begin{equation}\label{int:invidentity}
\frac {f(r+0) + f(r-0)}2 =  W^{-1}_{k,l} \left [W_{k,l} [f(\cdot)] \right ](r).
\end{equation}

For $l=k+1$ and non-negative $k\in\ZM$ the above formulas define associated Weber-Orr transform $W_{k,k+1}$ with inverse transform $W^{-1}_{k,k+1}$ satisfying the same identity above. This transform applies to elasticity theory and was the subject of some investigations \cite{S}\cite{KO}\cite{N}.  

Let's ask the question: what happens in the case $k>l$, $k \in \NM \cup \{0\}$ and what is the applications area of this transform? In present paper we will answer on this question when $l=k-1$. In this case the above formula (\ref{int:invidentity}) is incorrect. We will deduce correct formula of Weber-Orr transform in order to satisfy invertibility. And then we will present transparent application of this transform to fluid dynamics problems.

\vskip 5pt
So, when solving the Stokes system in exterior of the disc we will concentrate on the associated Weber-Orr transform $W_{k,k-1}$. Due to the fact that $R_{k,k-1}(\lambda,r)$ satisfies no-slip boundary condition (\ref{int:robin_bound}) for $k \in \NM \cup \{0\}$, and so $w_k(t,r)$ given by (\ref{int:explsol}) does the same, then the associated Weber-Orr transform $W_{k,k-1}$ will be the central point of our investigation. We can say that at this moment the only equations in which this transform finds its  application - are the Stokes and Navier-Stokes systems.  

The case of $l=k-1$ stands out from the rest Weber-Orr transforms since the formula (\ref{int:invidentity}) is no longer true and needs adding some corrections. The point is that the Robin-type condition 
$$\frac{\partial w_k(t,r)}{\partial r}\Big|_{r=r_0} + \alpha_k w_k(t,r_0) = 0$$ 
strongly depend on the sign of the $\alpha_k$, and the case $\alpha_k\geq0$ requires more thorough study.

Nasim\cite{N} investigated invertibility of $W_{k,k-1}$ and derived additional term for inverse transform $W^{-1}_{k,k-1}$. But his formula also needs some corrections to get the invertibility relation (\ref{int:invidentity}). We will find final formula of $W^{-1}_{k,k-1}$ in order to be inverse to $W_{k,k-1}$. As we will see, only in the case $k \geq 2$ the inverse transform must be supplied with additional correcting term, when for $k=0, 1$ the invertibility identity stays valid with $W^{-1}_{k,k-1}$ given by (\ref{int:weberorr}). For $k \geq 2$ the correcting term involves 
$$\int_{r_0}^\infty s^{-|k|+1}w_k(s)\ds,$$ which as we will see will be equal to zero when no-slip boundary condition is satisfied. 

And so, finally, we can say, that for both Stokes and Navier-Stokes systems with no-slip boundary condition the formula (\ref{int:weberorr}) presents actual inverse transform for $W_{k,k-1}$ and the solution of the heat equation is really provided by (\ref{int:explsol}). But in the common case the formulas (\ref{int:weberorr}), (\ref{int:weberorrinv}) do not satisfy the invertibility property (\ref{int:invidentity}).   

The structure of the paper is following. In the next section the statement of the initial boundary value problem is given. In section 2 we will deduce Biot-Savart law in polar coordinates for exterior of the disc. There will be considered both no-slip and slip boundary conditions. In section 3 we deduce no-slip boundary condition in terms of vorticity. And in section 4 we will find the solution of the Stokes system.

\section{The statement of the problem and the main result}
Consider the Stokes system defined in exterior of the disc  $B_{r_0}=\{\bx \in \RM^2,~|\bx| > r_0 \},~r_0>0$ with given horizontal flow at infinity $\bv_\infty = (v_{\infty},0) \in \RM^2$:
\begin{eqnarray}
&&\partial_t \bv - \Delta \bv = \nabla p \label{maineq}\\
&&{\rm div}~\bv(t,\bx)=0 \label{freediv}\\
&&\bv(0,\bx)=\bv_0(\bx) \label{init}\\
&&\bv(t,\bx)=0,~|\bx|=r_0 \label{bound}\\
&&\bv(t,\bx) \to \bv_\infty,~|\bx|\to \infty. \label{boundinf}
\end{eqnarray}
Here $\bv(t,\bx)=(v_1(t,\bx),v_2(t,\bx))$ is the velocity field and $p(t,\bx)$ is the pressure.

Applying to equations above the curl operator  
$w(t,\bx)=$ ${\rm curl}~\bv(t,\bx)$ $=\partial_{\bx_1}v_2 -  \partial_{\bx_2}v_1$ we get the system of equations for vorticity
\begin{eqnarray} 
\frac{\partial w(t,\bx)}{\partial t}	 - \Delta w = 0,  \label{maineqw} \\
w(0,\bx)=w_0(\bx) \label{initw}\\
\curl^{-1} w(t,\bx) \Big|_{|\bx|=r_0} = 0, \label{boundw}\\
w(t,\bx) \to 0,~|\bx|\to \infty \label{boundinfw}
\end{eqnarray} 
with initial datum $w_0(\bx)={\rm curl}~\bv_0(\bx)$. 

The last system will be the main object of our study instead of (\ref{maineq})-(\ref{boundinf}). 

%
%

It is naturally assumed, that the flow at infinity is the plane-parallel ones with zero circularity:
\begin{equation} \label{zerocirculation}
\lim_{R\to\infty}\oint_{|\bx|=R} \bv \cdot d\mathbf{l} = 0.
\end{equation} 
Further considerations will be held under zero-circulation assumption which due to Stokes formula and no-slip condition leads to zero mean vorticity:
\begin{eqnarray*}
\lim_{R\to\infty}\oint_{|\bx|=R} \bv(t,\bx) \cdot d\mathbf{l} = \int_{B_{r_0}} \curl \bv(t,\bx) \dx \nonumber \\  + 
\oint_{|\bx|=r_0} \bv(t,\bx) \cdot d\mathbf{l} = \int_{B_{r_0}} w(t,\bx) \dx = 0.
\end{eqnarray*} 

We will use Fourier expansions in polar coordinates $r$, $\varphi$: 
\begin{eqnarray}
 &&\bv(t,r,\varphi) = \sum_{k=-\infty}^\infty \bv_{k}(t,r)e^{ik\varphi},\nonumber \\
 &&\bv_0(r,\varphi) = \sum_{k=-\infty}^\infty \bv^0_{k}(r)e^{ik\varphi} \nonumber \\
 &&w(t,r,\varphi) = \sum_{k=-\infty}^\infty w_k(t,r)e^{ik\varphi},\nonumber \\
 &&w_0(r,\varphi) = \sum_{k=-\infty}^\infty w^0_{k}(r)e^{ik\varphi}. \nonumber
\end{eqnarray}
Here $\bv(t,r,\varphi) = (v_r, v_\varphi)$, $\bv_k(t,r) = (v_{r,k}, v_{\varphi,k}) =\frac 1{2\pi} \int_0^{2\pi} \bv(t,r,\varphi) e^{-ik\varphi} d\varphi $, $\bv_0(r,\varphi) = (v^0_{r}, v^0_{\varphi})$, $\bv^k_0(r) = (v^0_{r,k}, v^0_{\varphi,k})=\frac 1{2\pi} \int_0^{2\pi} \bv_0(r,\varphi) e^{-ik\varphi} d\varphi$ - are the vector fields decomposed on radial and tangent components. 

\vskip 5pt
The main result of the paper is
\begin{thm} Let vector field $\bv_0(\bx)$ satisfies (\ref{freediv}), (\ref{bound}), (\ref{boundinf}), (\ref{zerocirculation}), $\curl \bv_0(\bx)$ $ \in L_1(B_{r_0})$, and its Fourier series as well as Fourier series for vorticity converges and coefficients $w_k^0(r)$ satisfy $w_k^0(r) \sqrt r \in L_1(r_0,\infty)$, $k \in \ZM$. Then the solution of (\ref{maineqw})-(\ref{boundinfw}) is defined via Fourier coefficients:
$$
w_k(t,r) = W^{-1}_{|k|,|k|-1} \left [ e^{-\lambda^2 t} W_{|k|,|k|-1} [w^0_k(\cdot)](\lambda) \right ](t,r),
$$
where $W_{|k|,|k|-1}$, $W^{-1}_{|k|,|k|-1}$ are the associated Weber-Orr transforms (\ref{int:weberorr}), (\ref{int:weberorrinv}).
\end{thm}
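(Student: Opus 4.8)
The plan is to reduce the vorticity system (\ref{maineqw})--(\ref{boundinfw}) to a family of one-dimensional radial problems by Fourier expansion, and to solve each of them with the associated Weber-Orr transform, whose kernel is engineered to respect the Robin-type condition (\ref{int:robin_bound}). Substituting $w(t,r,\varphi)=\sum_k w_k(t,r)e^{ik\varphi}$ into $\partial_t w-\Delta w=0$ and using the polar form of the Laplacian, each coefficient satisfies the radial heat equation
\[
\frac{\partial w_k}{\partial t}=\frac{\partial^2 w_k}{\partial r^2}+\frac1r\frac{\partial w_k}{\partial r}-\frac{k^2}{r^2}w_k=:L_k w_k,\qquad r>r_0,
\]
together with the Robin condition (\ref{int:robin_bound}) at $r=r_0$ (which, as explained in the introduction, encodes no-slip through Biot-Savart) and decay as $r\to\infty$. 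The operator $L_k$ is exactly the Bessel operator, so $J_{|k|}(\lambda s)$ and $Y_{|k|}(\lambda s)$ are its eigenfunctions with eigenvalue $-\lambda^2$. First I would verify that the kernel $R_{|k|,|k|-1}(\lambda,r)$ itself satisfies (\ref{int:robin_bound}): using the recurrence $\partial_r J_{|k|}(\lambda r)=\lambda J_{|k|-1}(\lambda r)-\tfrac{|k|}{r}J_{|k|}(\lambda r)$ and its analogue for $Y_{|k|}$, the expression $r_0\,\partial_r R_{|k|,|k|-1}+|k|R_{|k|,|k|-1}$ at $r=r_0$ collapses to $r_0\lambda\bigl(J_{|k|-1}(\lambda r_0)Y_{|k|-1}(\lambda r_0)-Y_{|k|-1}(\lambda r_0)J_{|k|-1}(\lambda r_0)\bigr)=0$. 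This is precisely why the second index is taken to be $|k|-1$.

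Next I would apply $W_{|k|,|k|-1}$ to the radial equation; write $\hat w_k(t,\lambda)=W_{|k|,|k|-1}[w_k(t,\cdot)](\lambda)$. Since $R_{|k|,|k|-1}(\lambda,\cdot)$ solves $L_k R=-\lambda^2 R$, the Lagrange (Green) identity with Sturm-Liouville weight $s$ gives
\[
W_{|k|,|k|-1}[L_k w_k](\lambda)=-\lambda^2\hat w_k(\lambda)-r_0\bigl(R\,\partial_r w_k-w_k\,\partial_r R\bigr)\Big|_{r=r_0},
\]
the infinite endpoint contributing nothing by the integrability hypothesis $w_k^0\sqrt r\in L_1(r_0,\infty)$. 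Because both $w_k$ and the kernel obey the same Robin relation $g'(r_0)=-\tfrac{|k|}{r_0}g(r_0)$, the boundary bracket cancels identically, so $W_{|k|,|k|-1}$ converts $L_k$ into multiplication by $-\lambda^2$. The transformed heat equation is then the scalar ODE $\partial_t\hat w_k=-\lambda^2\hat w_k$, whence $\hat w_k(t,\lambda)=e^{-\lambda^2 t}\hat w_k(0,\lambda)=e^{-\lambda^2 t}W_{|k|,|k|-1}[w_k^0](\lambda)$, matching the bracketed expression in the statement.

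To recover $w_k(t,r)$ I would apply $W^{-1}_{|k|,|k|-1}$ and, setting $t=0$, check that the initial datum is reproduced. \emph{Here lies the main obstacle}: as flagged in the introduction, the inversion identity (\ref{int:invidentity}) holds verbatim only for $k=0,1$, while for $|k|\ge2$ the genuine inverse of $W_{|k|,|k|-1}$ carries a correcting term proportional to $\int_{r_0}^\infty s^{-|k|+1}w_k^0(s)\,\ds$. Thus the crux is to prove that this integral vanishes under the standing hypotheses. I would argue through the stream function $\psi_k$, defined by $L_k\psi_k=-w_k$ with $\bv_k=(\tfrac{ik}{r}\psi_k,\,-\partial_r\psi_k)$: variation of parameters against the homogeneous solutions $r^{\pm|k|}$ yields in $\psi_k$ a constant multiple of $r^{|k|}\int_{r_0}^\infty s^{-|k|+1}w_k(s)\,\ds$, a genuinely growing mode. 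For $|k|\ge2$ a nonzero coefficient would force $v_{\varphi,k}\sim r^{|k|-1}$ to blow up, contradicting $\bv\to\bv_\infty$ (only $k=\pm1$ feeds the constant limit, and the zero-circulation assumption (\ref{zerocirculation}) removes the remaining ambiguity at $k=0$); hence $\int_{r_0}^\infty s^{-|k|+1}w_k^0(s)\,\ds=0$.

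With the correcting term annihilated, $W^{-1}_{|k|,|k|-1}$ as given by (\ref{int:weberorrinv}) is a genuine left inverse of $W_{|k|,|k|-1}$ on the admissible class, so the identity (\ref{int:invidentity}) restores $w_k^0(r)$ at $t=0$, while the decaying family $e^{-\lambda^2 t}R_{|k|,|k|-1}(\lambda,r)e^{ik\varphi}$ solves the heat equation and respects (\ref{int:robin_bound}); resumming over $k$ reassembles the claimed formula. The routine points I would defer are the justification of differentiation under the transform integrals and the Fubini interchanges, all covered by the $L_1$ and bounded-variation assumptions inherited from Titchmarsh's inversion theorem.
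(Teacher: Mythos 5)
Your overall architecture differs from the paper's: you diagonalize the radial heat operator directly with the Weber--Orr transform (Lagrange identity plus the Robin-compatible kernel), whereas the paper takes a Laplace transform in time, solves the resulting modified Bessel equation by variation of parameters, and recovers both the Weber--Orr representation and its correction term from a contour deformation and a residue computation at $\tau=0$. Your route is cleaner, but it leans on two facts the paper must (and does) prove rather than cite: first, that the exact inverse of $W_{k,k-1}$ for $k\ge 2$ carries the correcting term proportional to $\int_{r_0}^\infty s^{-k+1}f(s)\,\ds$ --- this is the paper's invertibility corollary, obtained from the residue of $\hat\omega_k$ at $\tau=0$, and the paper's explicit point is that the versions available in the literature (Nasim) are not correct as stated; second, that the Fourier coefficients of the actual solution satisfy the Robin condition (\ref{noslip:robin_bound}) for all $t$ (Theorem \ref{thmrobincond}, proved via the invariance lemma), which you need for the boundary bracket in your Green identity to cancel. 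Referring to the introduction for both is not a proof.

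The concrete error is in your argument that $\int_{r_0}^\infty s^{-|k|+1}w_k^0(s)\,\ds=0$ for $|k|\ge 2$. You claim a nonzero value would force $v_{\varphi,k}\sim r^{|k|-1}$ to blow up, contradicting $\bv\to\bv_\infty$. That is false: in the bounded particular solution of the div--curl system the growing factor multiplies only the tail $\int_r^\infty s^{-|k|+1}w_k(s)\,\ds$, and under $w_k\sqrt s\in L_1(r_0,\infty)$ one has $r^{|k|-1}\bigl|\int_r^\infty s^{-|k|+1}w_k(s)\,\ds\bigr|=O(r^{-1/2})\to 0$ regardless of the value of $\int_{r_0}^\infty s^{-|k|+1}w_k(s)\,\ds$; equivalently, the coefficient of the growing homogeneous mode $r^{|k|}$ in the stream function is an independent constant that can always be chosen to cancel the growth you describe. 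So the condition at infinity never sees this integral, and a nonzero value is perfectly compatible with (\ref{boundinf2}). The constraint actually comes from the \emph{inner} boundary: once the infinity condition is imposed, the only remaining freedom is the decaying mode $\alpha_k r^{-|k|-1}$, and the two scalar conditions $v_{r,k}(r_0)=v_{\varphi,k}(r_0)=0$ overdetermine it; eliminating $\alpha_k$ yields exactly (\ref{noslipcondintegral}), i.e. $\int_{r_0}^\infty s^{-|k|+1}w_k(s)\,\ds=i v_\infty\delta_{|k|,1}\sign(k)/r_0^{|k|-1}$, which vanishes for $|k|\ge 2$. This is the content of the paper's Section 2 and Theorem \ref{Biotpolarnoslip}; your proof needs that boundary computation, not the asymptotics at infinity.
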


\section{Biot-Savart law in polar coordinates.}

Now we study when the solenoidal velocity field $\bv(\bx)$ can be uniquely restored from its vorticity $w(\bx)$.
Consider the following system
\begin{eqnarray}
&&\rm{div}~ \bv(\bx) = 0 \label{freediv2} \\
&&\rm{curl}~ \bv(\bx) = w(\bx) \label{curleq} \\
&&\bv(\bx)=0,~|\bx|=r_0 \label{bound2}\\
&&\bv(\bx)\to\bv_\infty,~|\bx|\to \infty \label{boundinf2}.
\end{eqnarray}

Rewrite (\ref{freediv2}),(\ref{curleq}) in polar coordinates in terms of Fourier coefficients $v_{r,k}$, $v_{\varphi,k}$:
\begin{eqnarray*}
&&{\frac {1}{r}}{\frac {\partial }{\partial r}}\left(rv_{r,k}\right)+{\frac {ik}{r}} v_{\varphi,k} = 0,\\
&&{\frac {1}{r}}{\frac {\partial }{\partial r}}\left(rv_{\varphi,k}\right)-{\frac {ik}{r}} v_{r,k} = w_k.
\end{eqnarray*}

The basis for solutions of homogeneous system when $w_k \equiv 0$ consists of two vectors:
\begin{align*}
\begin{pmatrix}
v^1_{r,k} \\
v^1_{\varphi,k} 
\end{pmatrix} 
=
\begin{pmatrix}
ir^{-k-1} \\
r^{-k-1} 
\end{pmatrix}
, \\
\begin{pmatrix}
v^2_{r,k} \\
v^2_{\varphi,k} 
\end{pmatrix} 
=
\begin{pmatrix}
ir^{k-1} \\
-r^{k-1} 
\end{pmatrix}.
\end{align*}

Rewrite the horizontal flow $\bv_\infty = (v_\infty,0)$ 
in polar coordinates: 
$$
v_\infty\begin{pmatrix}
\cos\varphi \\
-\sin\varphi
\end{pmatrix}
=
v_\infty\begin{pmatrix}
e^{i\varphi}/2 + e^{-i\varphi}/2 \\
-e^{i\varphi}/2i + e^{-i\varphi}/2i
\end{pmatrix}.
$$

Suppose, that Fourier coefficients $w_k(s)$ of $w(\bx)$ satisfy:
\begin{align}\label{BS:cond}
w_k(s) \sqrt s \in L_1(r_0,\infty), k \in \ZM \setminus \{0\},  \\ 
w_0(s) s \in L_1(r_0,\infty). \nonumber
\end{align}

Using the Fourier expansion of $\bv_\infty$ the solution of the system (\ref{freediv2}), (\ref{curleq}), (\ref{boundinf2}) will be given by the following formula: 
\begin{flalign}
k>0:&& \label{BS:bck1} \\
&v_{r,k}(r) = \frac{ir^{-k-1}}2 \int_{r_0}^r s^{k+1}w_k(s)\ds + \frac{ir^{k-1}}2 \int_r^\infty s^{-k+1}w_k(s)\ds + \frac{\delta_{k,1}}2 v_\infty \nonumber \\ 
&+ \alpha_k i r^{-k-1}, \nonumber\\
&v_{\varphi,k}(r) = \frac{r^{-k-1}}2 \int_{r_0}^r s^{k+1}w_k(s)\ds - \frac{r^{k-1}}2 \int_r^\infty s^{-k+1}w_k(s)\ds - \frac{\delta_{k,1}}{2i} v_\infty \nonumber\\
& + \alpha_k r^{-k-1},\nonumber 
\end{flalign}
\begin{flalign}
k<0: &&\label{BS:bck2}  \\
&v_{r,k} = - \frac{ir^{k-1}}2 \int_{r_0}^r s^{-k+1}w_k(s)\ds - \frac{ir^{-k-1}}2 \int_r^\infty s^{k+1}w_k(s)\ds + \frac{\delta_{k,-1}}2 v_\infty \nonumber\\ 
&+ \alpha_k i r^{k-1}, \nonumber\\
&v_{\varphi,k} = \frac{r^{k-1}}2 \int_{r_0}^r s^{-k+1}w_k(s)\ds - \frac{r^{-k-1}}2 \int_r^\infty s^{k+1}w_k(s)\ds + \frac{\delta_{k,-1}}{2i} v_\infty \nonumber\\ 
&- \alpha_k r^{k-1}, \nonumber  
\end{flalign}
\begin{flalign}
k=0: \label{BS:bck3} &&\\
&v_{r,0} = \frac {\alpha_0}r, \nonumber\\
&v_{\varphi,0} = \frac 1r \int_{r_0}^r s w_0(s)\ds + \frac {\theta}r, \nonumber
\end{flalign}
where $\delta_{k,1}$, $\delta_{k,-1}$ are the Kronecker deltas, $\{\alpha_k\}_{k=-\infty}^\infty$, $\theta$ will be found from the rest boundary condition (\ref{bound2}). 

The term $\frac {\theta}r$ in the last formula describes discrete vortex motion 
\begin{equation} \label{discretevortex}
\sigma_{\theta} (\bx) =  \theta \frac {\bx^\perp } { |\bx|^2},
\end{equation}
where $~\bx^\perp = (-x_2,x_1)$, and $\theta$ defines its intensity.

\subsection{No-slip condition}
For no-slip condition (\ref{bound2}) from $v_{r,k}(r_0)=v_{\varphi,k}(r_0)=0$ in virtue of (\ref{BS:bck3}) immediately follows $\alpha_0 = 0$, and $\theta = 0$. For $k>0$ from (\ref{BS:bck1}) follows
\begin{flalign*}
&v_{r,k}(r_0) = \frac{ir_0^{k-1}}2 \int_{r_0}^\infty s^{-k+1}w_k(s)\ds + \frac{\delta_{k,1}}2 v_\infty + \alpha_k i r^{-k-1} = 0, \nonumber\\
&v_{\varphi,k}(r_0) = - \frac{r_0^{k-1}}2 \int_{r_0}^\infty s^{-k+1}w_k(s)\ds - \frac{\delta_{k,1}}{2i} v_\infty 
 + \alpha_k r^{-k-1} = 0,\nonumber 
\end{flalign*}
and so $\alpha_k = 0$, $k \in \NM$. In similar way from (\ref{BS:bck2}) we have $\alpha_k = 0$ for negative $k$.

Then if there exists the solution of (\ref{freediv2}) - (\ref{boundinf2}) satisfying (\ref{BS:cond}), then it will be given by (\ref{BS:bck1})-(\ref{BS:bck3}) with $\alpha_k = 0$, $k \in \ZM$, and $\theta = 0$. 

Under assumption of zero-circularity (\ref{zerocirculation}) from Stokes' theorem we have 
$$
\lim_{R\to\infty}\oint_{|\bx|=R} \bv(\bx) \cdot d\mathbf{l} = \int_{B_{r_0}} w(\bx) \dx = 2\pi \int_{r_0}^\infty  w_0(s) s \ds = 0,
$$
and the term in (\ref{BS:bck3}) is represented as
$$
\frac 1r \int_{r_0}^r s w_0(s)\ds = \frac 1{2r} \int_{r_0}^r s w_0(s)\ds - \frac 1{2r} \int_r^\infty s w_0(s)\ds.
$$



Finally, the above formulas, which constitute Biot-Savart law in cylindrical domains, we rewrite for $k \in \ZM$:
\begin{eqnarray} \label{BiotSavar1}
&&v_{r,k} = \sign(k)  \frac{ir^{-|k|-1}}2 \int_{r_0}^r s^{|k|+1}w_k(s)\ds \\ 
&&~~~~~~~~~~~~~~~~~~+ \sign(k) \frac{ir^{|k|-1}}2 \int_r^\infty s^{-|k|+1}w_k(s)\ds  + \frac{\delta_{|k|,1}}2 v_\infty \nonumber \\ \label{BiotSavar2}
&&v_{\varphi,k} = \frac{r^{-|k|-1}}2 \int_{r_0}^r s^{|k|+1}w_k(s)\ds \nonumber \\ 
&&~~~~~~~~~~~~~~~~~~- \frac{r^{|k|-1}}2 \int_r^\infty s^{-|k|+1}w_k(s)\ds - \sign(k) \frac{\delta_{|k|,1}}{2i} v_\infty,~~
\end{eqnarray}
where $\sign(k) = \begin{cases} 1,~k > 0 \\ 0,~k=0 \\ -1,~k<0 \end{cases}$ - signum function, $\delta_{|k|,1}$ is the Kronecker delta, $v_\infty$ - the velocity of horizontal flow at infinity. 

Formulas (\ref{BiotSavar1}), (\ref{BiotSavar2}) combined with (\ref{bound2}) lead to relations on vorticity ($k\in \ZM$): 
\begin{equation} \label{noslipcondintegral}
\int_{r_0}^\infty s^{-|k|+1}w_k(s)\ds = i v_\infty \delta_{|k|,1} \sign(k)  / r_0^{|k|-1}. 
\end{equation}

\begin{thm}[Biot-Savart Law in polar coordinates]\label{Biotpolarnoslip}
Let Fourier coefficients $w_k(r)$ satisfy (\ref{BS:cond}), (\ref{noslipcondintegral}). Then there exists the unique solution of (\ref{freediv2}) - (\ref{boundinf2}) given by (\ref{BiotSavar1}), (\ref{BiotSavar2}) 
with Fourier coefficients $v_{r,k}$, $v_{\varphi,k} \in L_\infty(r_0,\infty)$.
%
\end{thm}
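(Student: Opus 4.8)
The plan is to argue mode by mode, since conditions (\ref{freediv2})--(\ref{boundinf2}) decouple across Fourier harmonics. For each fixed $k$ the divergence-free and curl equations reduce to the first-order linear system for $(v_{r,k},v_{\varphi,k})$ displayed above, whose homogeneous solution space is spanned by the two vectors $v^1$, $v^2$ already exhibited; the formulas (\ref{BiotSavar1}), (\ref{BiotSavar2}) are precisely the variation-of-parameters particular solution with homogeneous coefficients $\alpha_k,\theta$ set to zero. First I would confirm, by differentiating the integral expressions under the integral sign, that they do solve this ODE system for every $k$ — this merely repeats the computation of the derivation preceding the statement. This settles that any field built from these coefficients satisfies (\ref{freediv2}), (\ref{curleq}).

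Next I would establish the $L_\infty$ bound and the behaviour at infinity together, since both rest on the same estimate. The key is a H\"older-type splitting of the weight: writing $s^{|k|+1}=s^{|k|+1/2}\cdot s^{1/2}$ and using $s^{|k|+1/2}\le r^{|k|+1/2}$ on $[r_0,r]$, the first integral term of (\ref{BiotSavar1}) is bounded by $r^{-1/2}\|w_k\sqrt{s}\|_{L_1(r_0,\infty)}$; symmetrically, on $[r,\infty)$ one has $s^{-|k|+1/2}\le r^{-|k|+1/2}$ for $|k|\ge 1$, so the tail term is again $O(r^{-1/2})$. For $k=0$ the crude bound $\tfrac1r\int s\,|w_0|\,\ds\le \tfrac1r\|w_0 s\|_{L_1}$ suffices. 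Condition (\ref{BS:cond}) thus yields $v_{r,k},v_{\varphi,k}\in L_\infty(r_0,\infty)$, and the same estimates show the integral contributions vanish as $r\to\infty$; hence $v_{r,k},v_{\varphi,k}\to0$ for $|k|\neq1$, while for $|k|=1$ the surviving constant term reproduces exactly the polar Fourier coefficient of $\bv_\infty$, giving (\ref{boundinf2}).

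The no-slip condition (\ref{bound2}) is then verified by direct substitution: at $r=r_0$ the first integral vanishes and only the tail $\int_{r_0}^\infty s^{-|k|+1}w_k(s)\,\ds$ together with the $\delta_{|k|,1}$ term survive; inserting the value prescribed by (\ref{noslipcondintegral}) makes both $v_{r,k}(r_0)$ and $v_{\varphi,k}(r_0)$ vanish, the two $\delta$-contributions cancelling because $i^2=-1$ (respectively $1/i=-i$). This completes the existence part.

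For uniqueness I would pass to the difference $\bar\bv$ of two solutions with $L_\infty$ coefficients: each mode $\bar v_k$ solves the homogeneous ODE, vanishes at $r_0$, and tends to $0$ at infinity, so $\bar v_k=c_1 v^1+c_2 v^2$, and the argument splits into three cases. For $|k|\ge2$ one homogeneous solution grows like $r^{|k|-1}$ and the other decays like $r^{-|k|-1}$, so membership in $L_\infty$ alone forces the growing coefficient to vanish, after which $\bar v_k(r_0)=0$ kills the remaining constant. For $k=0$ both homogeneous solutions are multiples of $1/r$, both bounded and decaying, so here the two conditions $\bar v_{r,0}(r_0)=\bar v_{\varphi,0}(r_0)=0$ are exactly what is required. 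The delicate case is $|k|=1$, where one homogeneous solution decays like $r^{-2}$ but the other is a nonzero constant vector: boundedness no longer suffices, and one must invoke the full limit $\bar v_k\to0$ at infinity to discard the constant solution before vanishing at $r_0$ finishes the argument. In every case $c_1=c_2=0$, so $\bar\bv\equiv0$ and the solution is unique. I expect this $|k|=1$ case — where $L_\infty$ control must be supplemented by the precise decay at infinity — to be the \emph{main subtlety} of the proof.
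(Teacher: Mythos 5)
Your proposal is correct and follows essentially the same route as the paper: the paper's own proof is a terse appeal to the preceding variation-of-parameters derivation (the formulas with $\alpha_k=0$, $\theta=0$), asserting that (\ref{bound2}), (\ref{boundinf2}) and the $L_\infty$ inclusion follow from (\ref{BS:cond}), (\ref{noslipcondintegral}) and that uniqueness follows from the unique determination of $\alpha_k$ and $\theta$. You simply supply the details the paper leaves implicit — the $O(r^{-1/2})$ estimates, the cancellation at $r=r_0$, and the case analysis for uniqueness (your observation that $|k|=1$ needs the decay at infinity rather than mere boundedness is a genuine point the paper glosses over) — all of which check out.
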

\begin{proof}
Indeed, (\ref{bound2}), (\ref{boundinf2}) and the inclusion $v_{r,k}$, $v_{\varphi,k} \in L_\infty(r_0,\infty)$ follows from (\ref{BS:cond}), (\ref{noslipcondintegral}).  Formulas (\ref{BiotSavar1}), (\ref{BiotSavar2}) satisfy (\ref{freediv2}), (\ref{curleq}). Since the coefficients $\alpha_k = 0$, $k \in \ZM$, and $\theta = 0$ in (\ref{BS:bck1})-(\ref{BS:bck3}) are uniquely defined, then the solution is also unique. 
\end{proof}


\subsection{Slip condition}
Now we will weaken (\ref{bound2}) by
\begin{equation}\label{BS:slip}
\left(\bv(\bx), \bn \right) = 0,~|\bx|=r_0,
\end{equation}
where $\bn$ is the outer normal to the boundary and find the solution of the undetermined system (\ref{freediv2}), (\ref{curleq}), (\ref{boundinf2}) with (\ref{BS:slip}). It will obtain one degree of freedom which concerns to circularity of the flow. 

From (\ref{BS:slip}) follows $v_{r,k}(r_0)=0$, $k \in \ZM$. Then from (\ref{BS:bck1})-(\ref{BS:bck3})
\begin{align*}
&\alpha_k = - \frac{\sign(k) r_0^{2|k|}}2 \int_{r_0}^\infty s^{-|k|+1}w_k(s)\ds - \frac{r_0^{|k|+1}\delta_{|k|,1}}{2i} v_\infty,~k\in\ZM \setminus \{0\},\\
&\alpha_k=0,~k=0.
\end{align*}

So, the system (\ref{freediv2}), (\ref{curleq}), (\ref{BS:slip}), (\ref{boundinf2}) is not uniquely solvable due to arbitrary $\theta \in \RM$ in (\ref{BS:bck3}) which describes the intensity of the circulation around the origin. The vector field $\bv(\bx)$ is restored from its vorticity $w(\bx)$ up to circulation of $\bv(\bx)$ around boundary. In cartesian coordinates it means that to any solution of (\ref{freediv2}),(\ref{curleq}), (\ref{BS:slip}), (\ref{boundinf2}) $\bv(t,\bx)$ we can add discrete vortex (\ref{discretevortex}) and $\bv(t,\bx) + \sigma_{\theta} (\bx)$ will also be the solution of the same system  with any $\theta \in \RM$. In general, in exterior domains in contrast to simple connected ones, the vector field $\bv(\bx)$ is restored from its vorticity $w(\bx)$ with one degree of freedom - circularity around boundary (for more details see \cite{G2}).

\section{No-slip condition for vorticity in polar coordinates.}
Now we a ready to derive no-slip boundary condition for vorticity instead of integral relations (\ref{noslipcondintegral}).


Initial value problem (\ref{maineqw}), (\ref{initw}) in polar coordinates is given by equations
\begin{equation} \label{omegaeqpolar}
\frac{\partial w_k(t,r)}{\partial t}	 - \Delta_k w(t,r) = 0, ~w_k(0,r) = w^0_k(r),
\end{equation} 
where
$$
\Delta_k w_k(t,r) = \frac 1r \frac {\partial}{\partial r}\left(r \frac {\partial}{\partial r}w_k(t,r)\right) - \frac{k^2}{r^2} w_k(t,r).
$$

We supply (\ref{omegaeqpolar}) with Robin-type boundary condition:
\begin{equation}\label{noslip:robin_bound}
r_0\frac{\partial w_k(t,r)}{\partial r}\Big|_{r=r_0} + |k| w_k(t,r_0) = 0,~k \in \ZM.
\end{equation}

\begin{lem}The set
$$
\Omega = \{w_k^0(s) \in L_1(r_0,\infty,s^{-|k|+1}\ds) ~|~  \int_{r_0}^\infty s^{-|k|+1}w_k^0(s)\ds = const \}
$$ 
is invariant under the flow $e^{\Delta_k t}$ corresponding to (\ref{omegaeqpolar}) iff (\ref{noslip:robin_bound}) holds.
\end{lem}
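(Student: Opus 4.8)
The plan is to read ``invariance of $\Omega$'' as conservation along the flow of the linear functional
$$
I(t) := \int_{r_0}^\infty s^{-|k|+1} w_k(t,s)\,\ds,
$$
where $w_k(t,\cdot)=e^{\Delta_k t}w_k^0$ solves (\ref{omegaeqpolar}). Each level set $\{I=\text{const}\}$ is mapped into itself by the flow precisely when $t\mapsto I(t)$ is constant, i.e. when $\frac{d}{dt}I(t)=0$ for all $t>0$. So I would reduce the whole statement to computing $\frac{d}{dt}I(t)$ and identifying it with the Robin expression in (\ref{noslip:robin_bound}).

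The structural point I would exploit is that the weight is built from a homogeneous solution of the operator. Writing $m=|k|$, the function $\psi(s)=s^{-m}$ satisfies $\Delta_k\psi=0$, since $\frac1s(s\psi')'=m^2 s^{-m-2}=\frac{m^2}{s^2}\psi$. Moreover $I(t)=\int_{r_0}^\infty \psi(s)\,w_k(t,s)\,s\,\ds$ is exactly the pairing of $w_k(t,\cdot)$ with $\psi$ in the radial measure $s\,\ds$, in which $\Delta_k$ is the symmetric Sturm--Liouville operator $\frac1s\frac{d}{ds}\left(s\frac{d}{ds}\right)-\frac{m^2}{s^2}$ with coefficient $p(s)=s$. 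Differentiating under the integral sign and using the equation gives $\frac{d}{dt}I(t)=\int_{r_0}^\infty \psi\,\Delta_k w_k\,s\,\ds$, to which I would apply Green's identity
$$
\int_{r_0}^\infty \left(\psi\,\Delta_k w_k - w_k\,\Delta_k\psi\right)s\,\ds = \Big[\, s\big(\psi\,\partial_s w_k - w_k\,\partial_s\psi\big)\Big]_{r_0}^\infty.
$$
Since $\Delta_k\psi=0$, the bulk integral drops out entirely and only the boundary concomitant survives.

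Evaluating the concomitant with $\psi=s^{-m}$, $\psi'=-m s^{-m-1}$ gives $s\big(\psi\,\partial_s w_k - w_k\,\partial_s\psi\big)=s^{-m+1}\partial_s w_k + m\,s^{-m}w_k$, and, assuming the term at infinity vanishes, one obtains the clean identity
$$
\frac{d}{dt}I(t) = -\,r_0^{-|k|}\Big(r_0\,\frac{\partial w_k}{\partial r}\Big|_{r=r_0} + |k|\,w_k(t,r_0)\Big).
$$
From this the biconditional is immediate: if (\ref{noslip:robin_bound}) holds for every $t$ then $\frac{d}{dt}I\equiv 0$, so $I$ is conserved and $\Omega$ is invariant; conversely, invariance forces $\frac{d}{dt}I\equiv 0$, and since $r_0^{-|k|}\neq 0$ the bracket must vanish for all $t>0$, which is exactly (\ref{noslip:robin_bound}) (and at $t=0$ by continuity).

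The main obstacle, and the only place where the hypotheses enter nontrivially, is the justification of the two analytic steps: differentiation under the integral sign and the vanishing of the boundary concomitant as $s\to\infty$. Both rest on decay of $w_k(t,s)$ and $\partial_s w_k(t,s)$, which I would obtain from parabolic smoothing of the flow together with the weighted integrability hypotheses ($w_k^0\in L_1(r_0,\infty;s^{-|k|+1}\,\ds)$ defining $\Omega$, and $w_k^0(s)\sqrt{s}\in L_1(r_0,\infty)$ from the main theorem). This must be checked in the three regimes where the weight $s^{-|k|+1}$ grows ($k=0$), is constant ($|k|=1$), or decays ($|k|\ge 2$), since the required rate of decay at infinity differs accordingly; in each one needs $s^{-|k|+1}\partial_s w_k\to 0$ and $s^{-|k|}w_k\to 0$. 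Granting these, the Green's-identity computation is exact and the lemma follows.
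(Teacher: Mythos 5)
Your argument is correct and is essentially the paper's own proof: the paper differentiates $\int_{r_0}^\infty s^{-|k|+1}w_k(t,s)\,\ds$ under the integral and integrates by parts twice, obtaining exactly your identity $\frac{d}{dt}I(t)=-r_0^{-|k|}\bigl(r_0\,\partial_r w_k|_{r=r_0}+|k|\,w_k(t,r_0)\bigr)$ with the bulk terms cancelling; your packaging of those two integrations by parts as a Lagrange/Green identity against the $\Delta_k$-harmonic weight $s^{-|k|}$ is the same computation in cleaner form. You are in fact more careful than the paper, which silently discards the boundary concomitant at infinity and does not comment on differentiating under the integral sign.
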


\begin{proof} 
\begin{align*}
\frac d{dt}\int_{r_0}^\infty s^{-|k|+1}w_k(t,s)\ds = \int_{r_0}^\infty s^{-|k|+1} \Delta_k w_k(t,s)\ds \\
= \int_{r_0}^\infty s^{-|k|} \left (  \frac {\partial}{\partial s}\left(s \frac {\partial}{\partial s}w_k(t,s)\right) - \frac{k^2}{s}  w_k(t,s) \right ) \ds \\ 
= - r_0^{-|k|+1}\frac{\partial w_k(t,r)}{\partial r}\Big|_{r=r_0} + \int_{r_0}^\infty s^{-|k|} \left (  |k|  \frac {\partial}{\partial s}w_k(t,s) - \frac{k^2}{s} w_k(t,s) \right ) \ds\\
\\
= - r_0^{-|k|}\left(r_0 \frac{\partial w_k(t,r)}{\partial r}\Big|_{r=r_0} + |k| w_k(t,r_0) \right )
+ \int_{r_0}^\infty s^{-|k|-1}   \\
\times \left (  k^2 w_k(t,s) - k^2 w_k(t,s) \right ) \ds = 0
\end{align*}

\end{proof}

\begin{thm} \label{thmrobincond} Let initial vector field $\bv_0(\bx)$  satisfies(\ref{freediv}), (\ref{bound}), (\ref{boundinf}), (\ref{zerocirculation}), \\ $\curl \bv_0(\bx) \in L_1(B_{r_0})$, and its Fourier series as well as ones for vorticity converges and coefficients $w_k^0(r)$ satisfy $w_k^0(r) \sqrt r \in L_1(r_0,\infty)$, $k \in \ZM$. Then coefficients $w_k(t,r)$ of (\ref{maineqw})-(\ref{boundinfw}) satisfy Robin-type condition (\ref{noslip:robin_bound}).
\end{thm}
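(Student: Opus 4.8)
The plan is to convert the no-slip condition (\ref{boundw})—which, by the formulation of problem (\ref{maineqw})--(\ref{boundinfw}), holds at \emph{every} $t\ge 0$—into the Robin condition (\ref{noslip:robin_bound}) by exploiting the integral relation already extracted from Biot--Savart law. First I would invoke Theorem \ref{Biotpolarnoslip}: for each fixed $t$ the no-slip condition $\bv(t,\bx)|_{|\bx|=r_0}=0$ is equivalent to (\ref{noslipcondintegral}),
$$
\int_{r_0}^\infty s^{-|k|+1}w_k(t,s)\ds = i v_\infty \delta_{|k|,1}\sign(k)/r_0^{|k|-1},\qquad k\in\ZM.
$$
The decisive observation is that the right-hand side does not depend on $t$. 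Hence the functional $t\mapsto \int_{r_0}^\infty s^{-|k|+1}w_k(t,s)\ds$ is constant along the solution; equivalently, $w_k(t,\cdot)$ remains in the set $\Omega$ of the preceding Lemma for all $t$.

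Next I would differentiate this conserved quantity in time and use the heat equation (\ref{omegaeqpolar}). This is exactly the computation performed in the proof of the Lemma: writing $s^{-|k|+1}\Delta_k w_k$ out, integrating by parts twice, and cancelling the interior $k^2/s$ terms yields
$$
\frac{d}{dt}\int_{r_0}^\infty s^{-|k|+1}w_k(t,s)\ds = -r_0^{-|k|}\left(r_0\frac{\partial w_k(t,r)}{\partial r}\Big|_{r=r_0} + |k|\,w_k(t,r_0)\right).
$$
Since the left-hand side is the derivative of a constant it vanishes, and as $r_0^{-|k|}\neq 0$ the bracket on the right must vanish as well, which is precisely the Robin condition (\ref{noslip:robin_bound}). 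Equivalently, I may quote the Lemma verbatim: constancy of the integral means $\Omega$ is invariant under the flow $e^{\Delta_k t}$, and by the Lemma this invariance is equivalent to (\ref{noslip:robin_bound}).

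The step I expect to be delicate is the legitimacy of differentiating under the integral sign and, within the Lemma's computation, the vanishing of the boundary terms as $s\to\infty$ after the two integrations by parts (the contribution at $s=r_0$ is exactly what assembles into the Robin expression). This is where the hypotheses enter: $\curl\bv_0\in L_1(B_{r_0})$ together with $w_k^0(r)\sqrt r\in L_1(r_0,\infty)$, persisting for $t>0$ along the heat flow, guarantees that $s^{-|k|+1}w_k(t,s)$ and the associated derivative terms are integrable and decay at infinity, so that no spurious contribution survives from the upper limit. Once this integrability bookkeeping is secured the conclusion is immediate and uniform in $k\in\ZM$, including $k=0$, where (\ref{noslip:robin_bound}) degenerates to the Neumann condition $\partial_r w_0|_{r=r_0}=0$ and the conserved integral is zero by the zero-circulation assumption (\ref{zerocirculation}).
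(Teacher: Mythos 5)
Your proposal follows essentially the same route as the paper: the no-slip condition is converted via the Biot--Savart relations into the time-independent integral identity (\ref{noslipcondintegral}), so the quantity $\int_{r_0}^\infty s^{-|k|+1}w_k(t,s)\,ds$ is conserved, i.e.\ $\Omega$ is invariant under the heat flow, and the preceding Lemma then yields the Robin condition (\ref{noslip:robin_bound}). Your additional remarks on differentiating under the integral sign and on the vanishing of the boundary terms at infinity only make explicit what the paper leaves implicit, so the argument matches.
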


\begin{proof}From $\curl \bv_0(\bx)$ $ \in L_1(B_{r_0})$ follows
$w_0^0(r) r \in L_1(r_0,\infty)$ and (\ref{BS:cond}) is satisfied for $k = 0$.
No-slip condition imposes the relations (\ref{noslipcondintegral}), which must be satisfied for all $t>0$:
$$
\int_{r_0}^\infty s^{-|k|+1}w_k(t,s)\ds = i v_\infty \delta_{|k|,1} \sign(k)  / r_0^{|k|-1}, 
$$
i.e. $\Omega$ must be invariant under the flow $e^{\Delta_k t}$ corresponding to (\ref{omegaeqpolar}), (\ref{noslip:robin_bound}). Then the statement of the theorem is the direct result of the previous lemma.
\end{proof}

Thus, the problem (\ref{maineqw})-(\ref{boundinfw}) is reduced to (\ref{omegaeqpolar}), (\ref{noslip:robin_bound}). Now we will prove, that, as it was mentioned in preliminary, the relation  
$$\int_{r_0}^\infty s^{-|k|+1}w_k(s)\ds  = 0,~|k| \geq 2,$$ which follows from (\ref{noslipcondintegral}), will give the invertibility formula (\ref{int:invidentity}) for the  associated Weber-Orr transform $W_{k,k-1}$.  

\section{The solution of the Heat equation with Robin-type boundary condition. Derivation of the Weber-Orr transform.}

In this section we will solve the initial boundary value problem (\ref{omegaeqpolar}), (\ref{noslip:robin_bound}) reducing it to elliptic system with parameter (see \cite{AV}).

Laplace transform
$$
 \hat \omega(\tau,\bx)=\int_0^\infty e^{-\tau t}w(t,\bx) \dx
$$
reduces (\ref{maineqw}), (\ref{initw}) to Poison equation with parameter $\tau$
\begin{equation}\label{poison}
\Delta \hat \omega - \tau \hat \omega  = - w_0(\bx).
\end{equation}

Then Fourier coefficients $\hat \omega_k(\tau,r)$ satisfy differential equation
\begin{equation} \label{besselrhs}
 \frac{\partial^2 \hat \omega_k}{\partial r^2} + \frac1r \frac{\partial \hat \omega_k}{\partial r} - \Big(\frac{k^2}{r^2} + \tau \Big) \hat \omega_k = -w_k^0(r).
\end{equation}
 
Since the above equation is invariant under the change $k$ to $-k$, then we will carry out further considerations only for non-negative integer $k$. 

Basis of solutions to homogeneous Bessel equation (\ref{besselrhs}) when right-hand side $-w_k^0(r)$ is equal to zero consists of modified Bessel functions of the first and second kind $I_k(\sqrt{\tau}r)$, $K_k(\sqrt{\tau}r)$  (see \cite{BE}, \cite{W}). Following by \cite{G2} we exclude exponentially growing function $I_k(\sqrt{\tau}r)$ from the solution of (\ref{besselrhs}). The Wronskian of these functions is equal to:
\begin{equation}\label{wronsk}
W\left[I_k(r),K_k (r)\right]=-r^{-1}.
\end{equation}
Then 
$$K_k(\sqrt{\tau}r)\int_{r_0}^r w_k^0(s) I_k(\sqrt{\tau}s) s ds \nonumber \\ + 
I_k(\sqrt{\tau}r)\int_r^{\infty} w_k^0(s) K_k(\sqrt{\tau}s) s ds $$
will be the solution of non-homogeneous Bessel equation (\ref{besselrhs}). And finally we have the formula: 
\begin{eqnarray} \label{directsol}
 \hat \omega_k(\tau,r) = h_k(\tau) K_k(\sqrt{\tau}r) + K_k(\sqrt{\tau}r)\int_{r_0}^r w_k^0(s) I_k(\sqrt{\tau}s) s ds \nonumber \\ + 
I_k(\sqrt{\tau}r)\int_r^{\infty} w_k^0(s) K_k(\sqrt{\tau}s) s ds ,
\end{eqnarray}
where $h_k(\tau)$ will be founded from boundary condition (\ref{noslip:robin_bound}). To do this we apply the operator of boundary condition $$Z_{k,r_0}[\cdot]= r_0{\frac{\partial \cdot}{\partial r }} \Big |_{r=r_0} + k \cdot$$ to the expression 
$$f_k(\tau,r) := K_k(\sqrt{\tau}r)\int_{r_0}^r w_k^0(s) I_k(\sqrt{\tau}s) s ds  + 
I_k(\sqrt{\tau}r)\int_r^{\infty} w_k^0(s) K_k(\sqrt{\tau}s) s ds.$$

Since
$$
\frac{\partial f_k}{\partial r}(\tau, r_0) = \sqrt \tau I_k'(\sqrt \tau r_0) \int_{r_0}^\infty w_k^0(s) K_k(\sqrt{\tau}s) s ds,
$$
then
\begin{align*}
Z_{k,r_0} [f_k(\tau,r)] =  \sqrt \tau r_0 I_k'(\sqrt \tau r_0) \int_{r_0}^\infty w_k^0(s) K_k(\sqrt{\tau}s) s ds + k I_k(\sqrt{\tau}r_0)
 \nonumber \\ \times \int_{r_0}^\infty w_k^0(s) K_k(\sqrt{\tau}s) s ds.
\end{align*}

Using the differentiation relations on Bessel functions 
\begin{eqnarray*}
I_{k-1}(r)-I_{k+1}(r)={\frac {2k }{r}}I_{k }(r)\\
I_{k-1}(r)+I_{k+1}(r)=2{\frac {dI_{k }}{dr}}(r) \\
\end{eqnarray*}
we will have
$$
\sqrt \tau r_0 I_k'(\sqrt \tau r_0) + k I_k(\sqrt{\tau}r_0) = \sqrt \tau r_0 I_{k-1}(\sqrt \tau r_0).
$$

Then
\begin{align*}
Z_{k,r_0}[f_k(\tau,r)] =  r_0 \sqrt \tau  I_{k-1}(\sqrt \tau r_0) \int_{r_0}^\infty w_k^0(s) K_k(\sqrt{\tau}s) s ds.
\end{align*}
From
\begin{align*}
K_{k-1}(r)-K_{k+1}(r)=-{\frac {2k }{r}}K_{k }(r)\\
K_{k-1}(r)+K_{k+1}(r)=-2{\frac {dK_{k }}{dr}}(r)
\end{align*}
follows
\begin{align*}
Z_{k,r_0}[K_k(\sqrt \tau r)] =  -r_0 \sqrt \tau K_{k-1}(\sqrt \tau r_0).
\end{align*}

We have got the solution of (\ref{besselrhs}) with boundary (\ref{noslip:robin_bound}) for any non-negative $k \in \ZM$
\begin{align*} 
 \hat \omega_k(\tau,r) = \frac{K_{k}(\sqrt{\tau}r) I_{k-1}(\sqrt \tau r_0)}{K_{k-1}(\sqrt \tau r_0)} \int_{r_0}^\infty w_k^0(s) K_{k}(\sqrt{\tau}s) s ds \nonumber \\+ K_{k}(\sqrt{\tau}r)\int_{r_0}^r w_k^0(s) I_{k}(\sqrt{\tau}s) s ds  + 
I_{k}(\sqrt{\tau}r)\int_r^{\infty} w_k^0(s) K_{k}(\sqrt{\tau}s) s ds. 
\end{align*}
For negative $k \in \ZM$ previous formula stays valid when $k$ changes to $|k|$ in Bessel functions $K_{k}$, $K_{k-1}$, $I_{k}$, $I_{k-1}$. Finally we have
\begin{align}  \label{wksol}
 \hat \omega_k(\tau,r) = \frac{K_{|k|}(\sqrt{\tau}r) I_{|k|-1}(\sqrt \tau r_0)}{K_{|k|-1}(\sqrt \tau r_0)} \int_{r_0}^\infty w_k^0(s) K_{|k|}(\sqrt{\tau}s) s ds \nonumber \\+ K_{|k|}(\sqrt{\tau}r)\int_{r_0}^r w_k^0(s) I_{|k|}(\sqrt{\tau}s) s ds  + 
I_{|k|}(\sqrt{\tau}r)\int_r^{\infty} w_k^0(s) K_{|k|}(\sqrt{\tau}s) s ds. 
\end{align}

In order to obtain the solution $w(t,x)$ of the heat equation we apply inverse Laplace transform
$$
w(t,r,\varphi) = \frac1{2\pi i} \int_{\Gamma_\eta}e^{\tau t}\hat \omega(\tau,r,\varphi)d\tau,
$$
where $\Gamma_\eta = \{ \tau \in \CM, ~\operatorname{Re}\tau = \eta \}$, $\eta$ - fixed arbitrary positive real number.

Since $\sqrt \tau$ involved in (\ref{wksol}) is a branching function, then using Cauchy formula we change contour of integration from $\Gamma_\eta$ to $\Gamma_{-\eta, \varepsilon}^-$ with some $\varepsilon>0$, where
\begin{eqnarray}
\Gamma_{-\eta, \varepsilon}^- = ( -\eta -i\infty, -\eta -i\varepsilon] \cup
[-\eta - i\varepsilon, - i\varepsilon]     
\cup [- i\varepsilon, i\varepsilon] \nonumber \\
\cup [i\varepsilon, -\eta + i\varepsilon] 
\cup [-\eta + i\varepsilon, -\eta + i\infty). \nonumber
\end{eqnarray}

Define oriented contour $\gamma_{\pm,\eta} = [-\eta,0] \cup [0,-\eta]$. Then, going $\varepsilon \to 0$, $\eta \to \infty$ we will have  equalities:
\begin{align} 
&w(t,r,\varphi) = 
\frac1{2\pi i} \sum_{k=-\infty}^\infty e^{ik\varphi}  \int_{-\infty}^0 e^{\tau t}\hat \omega_k(\tau-i0,r)d\tau \nonumber \\  
&+ \frac1{2\pi i}  \sum_{k=-\infty}^\infty e^{ik\varphi}   \int_0^{-\infty} e^{\tau t}\hat \omega_k(\tau+i0,r)d\tau 
+ \sum_{k=-\infty}^\infty e^{ik\varphi} \operatorname{res}\limits_{\tau=0}[\hat \omega_k(\tau,r)] ,  \nonumber
\end{align}
where 
$$
{\mathrm  {res}}_{\tau=0}\,[\hat \omega_k(\tau,r)]=\lim _{{\rho \to 0}}{1 \over {2\pi i}}\int \limits _{{|\tau|=\rho }}\!\hat \omega_k(\tau,r)\,d\tau.
$$ 
 
Set $\tau = -\lambda^2$. Then, we will have  
\begin{eqnarray} \label{wformula}
w(t,r,\varphi) = \nonumber 
\frac1{\pi i} \sum_{k=-\infty}^\infty e^{ik\varphi} \int_0^\infty e^{-\lambda^2 t} \left ( w_k(-\lambda^2-i0,r)  - w_k(-\lambda^2+i0,r) \right )
 \lambda \mathrm{d}\lambda  \nonumber \\ + \sum_{k=-\infty}^\infty e^{ik\varphi} \operatorname{res}\limits_{\tau=0}[\hat \omega_k(\tau,r)].~~~
\end{eqnarray} 

We will use the following lemma, involving Bessel function of the first kind $J_k(r)$, which was proved in \cite{G2}:
\begin{lem} \label{besselrelationlem}
For $\lambda,r,s > 0$ modified Bessel functions $I_k,K_k$ will satisfy: 
$$
I_k(-i\lambda r)K_k(-i\lambda s) - I_k(i\lambda r)K_k(i\lambda s) = \pi i J_k(\lambda r)J_k(\lambda s).
$$
\end{lem}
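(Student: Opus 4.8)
The plan is to reduce everything to the standard connection formulas that express the modified Bessel functions $I_k$, $K_k$ evaluated on the imaginary axis in terms of the ordinary Bessel function $J_k$ and the Hankel function $H_k^{(1)} = J_k + iY_k$. Since the order $k$ is a real integer, both $I_k$ and $K_k$ have real power-series coefficients, so $\overline{I_k(z)} = I_k(\bar z)$ and $\overline{K_k(z)} = K_k(\bar z)$. Because $\lambda, r, s > 0$ are real, $\overline{-i\lambda r} = i\lambda r$ and $\overline{-i\lambda s} = i\lambda s$, whence $I_k(i\lambda r) = \overline{I_k(-i\lambda r)}$ and $K_k(i\lambda s) = \overline{K_k(-i\lambda s)}$. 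Consequently the left-hand side is exactly $2i\,\operatorname{Im}\big[I_k(-i\lambda r)\,K_k(-i\lambda s)\big]$, so it suffices to compute this single product and extract its imaginary part.

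First I would record the two principal-branch identities, valid on $-\pi < \arg z \le \pi/2$ (which covers the argument $z = \lambda r\,e^{-i\pi/2}$): namely $I_k(-i\lambda r) = e^{-k\pi i/2} J_k(\lambda r)$ and $K_k(-i\lambda s) = \tfrac{\pi i}{2} e^{k\pi i/2} H_k^{(1)}(\lambda s)$. Multiplying them, the phases $e^{-k\pi i/2}$ and $e^{k\pi i/2}$ cancel, giving
$$I_k(-i\lambda r)\,K_k(-i\lambda s) = \frac{\pi i}{2}\, J_k(\lambda r)\,H_k^{(1)}(\lambda s) = \frac{\pi i}{2}\,J_k(\lambda r)\big(J_k(\lambda s) + iY_k(\lambda s)\big).$$
Since $\lambda r$, $\lambda s$ are real, $J_k$ and $Y_k$ take real values, so the imaginary part of this product is $\tfrac{\pi}{2}J_k(\lambda r)J_k(\lambda s)$. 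Multiplying by $2i$ yields $\pi i\,J_k(\lambda r)J_k(\lambda s)$, the claimed identity.

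As a cross-check, without the conjugation shortcut I would also express $I_k(i\lambda r) = e^{k\pi i/2}J_k(\lambda r)$ and, using the Hankel continuation formula $H_k^{(1)}(ze^{i\pi}) = -e^{-k\pi i}H_k^{(2)}(z)$, obtain $K_k(i\lambda s) = -\tfrac{\pi i}{2}e^{-k\pi i/2}H_k^{(2)}(\lambda s)$. Substituting both products directly into the left-hand side and using $H_k^{(1)} + H_k^{(2)} = 2J_k$ reproduces the same answer, making transparent where the cancellation comes from.

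The main obstacle is bookkeeping of branches and phase factors rather than any deep estimate: one must verify that the arguments $\mp i\lambda r$, $\mp i\lambda s$ land in the sector where the principal-branch connection formulas apply (two of them sit exactly on the boundary $\arg z = \pm\pi/2$, which is precisely why the continuation identity $H_k^{(1)}(ze^{i\pi}) = -e^{-k\pi i}H_k^{(2)}(z)$ is needed), and that the integer-order reflection $J_k(-x) = (-1)^k J_k(x)$ is applied with the correct $e^{ik\pi}$ phase. Once the phases are tracked consistently they cancel in pairs, and the result follows from the elementary relation $H^{(1)}_k + H^{(2)}_k = 2J_k$.
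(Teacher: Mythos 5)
Your proof is correct, but it is worth noting that the paper itself does not prove this lemma at all: it simply cites \cite{G2} for the proof and then, immediately afterwards, records the analytic continuation formulas $I_k(\mp i\lambda r)=(\mp i)^k J_k(\lambda r)$, $K_k(-i\lambda r)=\tfrac{\pi}{2}i^{k+1}H_k^{(1)}(\lambda r)$, $K_k(i\lambda r)=-\tfrac{\pi}{2}i^{k+1}H_{-k}^{(2)}(\lambda r)$, from which the identity follows by substituting both products and using $H_k^{(1)}+H_k^{(2)}=2J_k$ --- exactly your ``cross-check'' computation. Your primary route is a genuine economization of that argument: by observing that $I_k$ and $K_k$ are real on $(0,\infty)$ and analytic in the cut plane, Schwarz reflection gives $I_k(i\lambda r)=\overline{I_k(-i\lambda r)}$ and $K_k(i\lambda s)=\overline{K_k(-i\lambda s)}$, so the left-hand side collapses to $2i\operatorname{Im}\bigl[I_k(-i\lambda r)K_k(-i\lambda s)\bigr]$ and only one product need be computed; this also avoids handling the $H^{(2)}$ branch formula at all. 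One small imprecision: for integer $k$ the function $K_k$ does \emph{not} have a pure power-series expansion (its expansion about $z=0$ contains a $\log(z/2)$ term), so the justification for $\overline{K_k(z)}=K_k(\bar z)$ should be phrased via the reflection principle (real on the positive axis, analytic on the conjugation-symmetric cut plane) rather than via ``real power-series coefficients''; with that rewording the argument is complete. Your tracking of the branch conditions $-\pi<\arg z\le \pi/2$ for $z=\lambda r e^{-i\pi/2}$ is accurate and the phase factors $e^{\mp k\pi i/2}$ cancel exactly as you claim.
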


Recall the definition of Hankel functions $H_k^{(1)}$, $H_{-k}^{(2)}$ (see \cite{BE}, \cite{W}):
\begin{align} \label{hankelfunctions}
&H_k^{(1)}(\lambda r) = J_k(\lambda r) + iY_k(\lambda r) \nonumber \\
&H_{-k}^{(2)}(\lambda r) =
(-1)^k(J_{k}(\lambda r) - iY_{k}(\lambda r)).
\end{align}

Then analytical continuation of $I_k,K_k$ on imaginary line is given as follows(see \cite{W}): 
\begin{align} \label{besselcontnuation}
&I_k(-i\lambda r)=(-i)^k J_k(\lambda r),\nonumber  \\  
&I_k(i\lambda r)=i^k J_k(\lambda r), \nonumber  \\ 
&K_k(-i\lambda r) = \frac\pi 2 i^{k+1}H_k^{(1)}(\lambda r), \nonumber  \\ 
&K_k(i\lambda r) = -\frac\pi 2 i^{k+1}H_{-k}^{(2)}(\lambda r). 
\end{align}

We decompose function $\hat \omega_k(\tau,r)$ in (\ref{wksol}) for $k\in\NM\cup\{0\}$ (as usual for negative integer $k$ the indexes in all Bessel functions involving $k$ should be changed to $|k|$):
$$
 \hat \omega_k(\tau,r) = G_{k,1}(\tau,r)+ G_{k,2}(\tau,r), 
$$ 
where $G_{k,1}(\tau,r)$, $G_{k,2}(\tau,r)$ are defined as 
\begin{align*}
G_{k,1}(\tau,r) &= \frac{K_k(\sqrt{\tau}r) I_{k-1}(\sqrt \tau r_0)}{K_{k-1}(\sqrt \tau r_0)} \int_{r_0}^\infty w_k^0(s) K_k(\sqrt{\tau}s) s ds, \nonumber \\
G_{k,2}(\tau,r) &= K_k(\sqrt{\tau}r)\int_{r_0}^r w_k^0(s) I_k(\sqrt{\tau}s) s ds \nonumber \\ 
&+ I_k(\sqrt{\tau}r)\int_r^{\infty} w_k^0(s) K_k(\sqrt{\tau}s) sds. \nonumber
\end{align*}

Then with help of (\ref{hankelfunctions}), (\ref{besselcontnuation}) we will have
\begin{align*}
&G_{k,1}(-\lambda^2-i0,r)  - G_{k,1}(-\lambda^2+i0,r) = \\
&=\int_{r_0}^\infty \Big ( \frac{K_k(-i\lambda r) I_{k-1}(-i\lambda r_0)K_k(-i\lambda s)}{K_{k-1}(-i\lambda r_0)} \\
&- \frac{K_k(i\lambda r) I_{k-1}(i\lambda r_0)K_k(i\lambda s)}{K_{k-1}(i\lambda r_0)} \Big ) w_k^0(s)  s ds \nonumber \\
&=\frac{\pi i}2 \int_{r_0}^\infty \Big ( - \frac{H_k^{(1)}(\lambda r) J_{k-1}(\lambda r_0)H_k^{(1)}(\lambda s)}{H_{k-1}^{(1)}(\lambda r_0)} \\
&+ \frac{H_{-k}^{(2)}(\lambda r) J_{k-1}(\lambda r_0)H_{-k}^{(2)}(\lambda s)}{H_{-(k-1)}^{(2)}(\lambda r_0)} \Big ) w_k^0(s)  s ds \nonumber \\
&=-\frac{\pi i}2 \int_{r_0}^\infty  \Big ( \frac{(J_k(\lambda r)+iY_k(\lambda r)) J_{k-1}(\lambda r_0)(J_k(\lambda s)+iY_k(\lambda s))}{J_{k-1}(\lambda r_0)+iY_{k-1}(\lambda r_0)} \\ 
~~~~& + \frac{(J_k(\lambda r)-iY_k(\lambda r)) J_{k-1}(\lambda r_0)(J_k(\lambda s)-iY_k(\lambda s))}{J_{k-1}(\lambda r_0)-iY_{k-1}(\lambda r_0)}  \Big ) w_k^0(s)  s ds.
\nonumber
\end{align*}

From lemma \ref{besselrelationlem} follows
\begin{align*}
G_{k,2}(-\lambda^2-i0,r)  - G_{k,2}(-\lambda^2+i0,r) = \pi i \int_{r_0}^\infty J_k(\lambda r) J_k(\lambda s) w_k^0(s) s ds,
\end{align*}
and
\begin{align*}&\hat \omega_k(-\lambda^2-i0,r) - \hat \omega_k(-\lambda^2+i0,r) \nonumber \\ 
&=G_{k,1}(-\lambda^2-i0,r)  - G_{k,1}(-\lambda^2+i0,r) \nonumber \\
&+ G_{k,2}(-\lambda^2-i0,r)  - G_{k,2}(-\lambda^2+i0,r)   
=-\frac{\pi i}2 \int_{r_0}^\infty \Big ( \\ & \frac{J_{k-1}(\lambda r_0)(J_k(\lambda r)+iY_k(\lambda r)) (J_k(\lambda s)+iY_k(\lambda s)) (J_{k-1}(\lambda r_0)-iY_{k-1}(\lambda r_0) )}{J_{k-1}(\lambda r_0)^2+Y_{k-1}(\lambda r_0)^2}\\
&+ \frac{J_{k-1}(\lambda r_0)(J_k(\lambda r)-iY_k(\lambda r)) (J_k(\lambda s)-iY_k(\lambda s))(J_{k-1}(\lambda r_0)+iY_{k-1}(\lambda r_0))}{J_{k-1}(\lambda r_0)^2+Y_{k-1}(\lambda r_0)^2}
\\
&-2\frac {J_k(\lambda r) J_k(\lambda s) (J_{k-1}(\lambda r_0)^2+Y_{k-1}(\lambda r_0)^2)} {J_{k-1}(\lambda r_0)^2+Y_{k-1}(\lambda r_0)^2} \Big )  w_k^0(s) s ds.\\
\end{align*}

After a series of transformations we will have
\begin{align*}&\hat \omega_k(-\lambda^2-i0,r) - \hat \omega_k(-\lambda^2+i0,r)\\
=&\pi i \int_{r_0}^\infty  \frac{ \left (J_{k-1}(\lambda r_0) Y_k(\lambda r) - Y_{k-1}(\lambda r_0) J_k(\lambda r) \right )
 } {J_{k-1}(\lambda r_0)^2 + Y_{k-1}(\lambda r_0)^2} \\
&\times \left ( J_{k-1}(\lambda r_0) Y_k(\lambda s) - Y_{k-1}(\lambda r_0) J_k(\lambda s) \right )  w_k^0(s) s ds  \\
&= \pi i \int_{r_0}^\infty  \frac{ R_{k,k-1}(\lambda, r) R_{k,k-1}(\lambda, s)  w_k^0(s) s ds} {J_{k-1}(\lambda r_0)^2 + Y_{k-1}(\lambda r_0)^2}
\end{align*}

Recall that we assumed $k \geq 0$. Then  due to invariance of Bessel equation (\ref{besselrhs}) under the change $k$ to $-k$ from (\ref{wformula}) we will find the solution
\begin{align*}
&w(t,r,\varphi) = \nonumber 
\sum_{k=-\infty}^\infty e^{ik\varphi} \int_0^\infty  \frac{  R_{|k|,|k|-1}(\lambda, r)} 
{J_{|k|-1}(\lambda r_0)^2 + Y_{|k|-1}(\lambda r_0)^2} \\ 
& \times \left ( \int_{r_0}^\infty R_{|k|,|k|-1}(\lambda, s) w_k^0(s) s ds \right )  e^{-\lambda^2 t} \lambda \mathrm{d}\lambda 
+ \sum_{k=-\infty}^\infty e^{ik\varphi} \operatorname{res}\limits_{\tau=0}[\hat \omega_k(\tau,r)]. 
\end{align*}

Now we will find the residues of $\hat \omega_k(\tau,r)$ at $\tau=0$. 

\begin{lem} The residues of $\hat \omega_k(\tau,r)$ defined by (\ref{wksol}) at $\tau=0$  are given by
$$\operatorname{res}\limits_{\tau=0}[\hat \omega_k(\tau,r)] = {\begin{cases}0,~|k|\in \{0,1\},\\ \frac {2(|k|-1) r_0^{2|k|-2}} {r^{|k|}} \int_{r_0}^\infty s^{-|k|+1} w_k(0,s) \ds,~|k|\in \NM \setminus \{1\}.\end{cases}}
$$ 
\end{lem}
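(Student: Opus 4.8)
The plan is to evaluate the residue straight from its definition
$$\operatorname{res}_{\tau=0}[\hat\omega_k(\tau,r)]=\lim_{\rho\to0}\frac1{2\pi i}\oint_{|\tau|=\rho}\hat\omega_k(\tau,r)\,d\tau,$$
by expanding $\hat\omega_k$ near the branch point $\tau=0$. Since $\hat\omega_k$ depends on $\tau$ only through $\sqrt\tau$, through the integer‑power Bessel series and through logarithms, every term of the expansion has the form $\tau^{m/2}$ or $\tau^{m/2}\ln\tau$ with $m\in\ZM$. I would first record the elementary facts $\oint_{|\tau|=\rho}\tau^{n}\,d\tau=2\pi i\,\delta_{n,-1}$ for integer $n$, and $\oint_{|\tau|=\rho}\tau^{m/2}\,d\tau\to0$ as $\rho\to0$ for half‑integer exponents with $m\ge-1$, while $\oint_{|\tau|=\rho}\tau^{m}\ln\tau\,d\tau\to0$ for $m\ge0$. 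The conclusion is that the limit selects exactly the coefficient of the genuine simple pole $\tau^{-1}$, provided no term more singular than $\tau^{-1}$ (such as $\tau^{-3/2}$ or $\tau^{-1}\ln\tau$, whose contour integrals do not vanish) is present. Establishing the latter is the real content of the proof; once it is granted, the residue equals the $\tau^{-1}$ coefficient.

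To extract that coefficient I would use the decomposition $\hat\omega_k=G_{k,1}+G_{k,2}$ together with the small‑argument asymptotics
$$I_\nu(z)\sim\frac{(z/2)^\nu}{\Gamma(\nu+1)},\qquad K_\nu(z)\sim\tfrac12\Gamma(\nu)\Big(\tfrac{z}{2}\Big)^{-\nu}\ (\nu>0),\qquad K_0(z)\sim-\ln\tfrac{z}{2}-\gamma,$$
with $z=\sqrt\tau\,r$, etc. In $G_{k,2}$ the two products $K_k(\sqrt\tau r)I_k(\sqrt\tau s)$ and $I_k(\sqrt\tau r)K_k(\sqrt\tau s)$ each balance $\tau^{-k/2}$ against $\tau^{k/2}$, so $G_{k,2}=O(1)$ (only $O(\ln\tau)$ when $k=0$) and carries no pole. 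The decisive term is $G_{k,1}$. For $|k|\ge2$ the substitution of the asymptotics into $K_k(\sqrt\tau r)\frac{I_{k-1}(\sqrt\tau r_0)}{K_{k-1}(\sqrt\tau r_0)}K_k(\sqrt\tau s)$ makes the powers combine as $\tau^{-k/2}\cdot\tau^{k-1}\cdot\tau^{-k/2}=\tau^{-1}$, and collecting the Gamma factors via $\Gamma(k)/\Gamma(k-1)=k-1$ together with the powers of $2$ and $r_0$ yields
$$G_{k,1}(\tau,r)\sim\frac{2(|k|-1)\,r_0^{2|k|-2}}{r^{|k|}}\Big(\int_{r_0}^\infty s^{-|k|+1}w_k^0(s)\,\ds\Big)\frac1\tau,$$
whose $\tau^{-1}$ coefficient is precisely the asserted value (the passage from $k$ to $|k|$ being justified by the $k\mapsto-k$ invariance of \eqref{besselrhs}).

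The delicate cases are $|k|\in\{0,1\}$, where logarithms enter the leading order. For $|k|=1$ the denominator $K_{k-1}=K_0$ is logarithmic, so $G_{1,1}\sim C(r)/(\tau\ln\tau)$ rather than $C(r)/\tau$; since $\oint_{|\tau|=\rho}d\tau/(\tau\ln\tau)=\big[\ln\ln\tau\big]$ around the circle equals $\ln\!\big(1+\tfrac{2\pi i}{\ln\rho}\big)\to0$ as $\rho\to0$, the residue vanishes. For $|k|=0$ one has $I_{-1}/K_{-1}=I_1/K_1\sim\tau r_0^2/2$ and two logarithmic $K_0$ factors, giving $G_{0,1}=O(\tau(\ln\tau)^2)$ and again no contribution, while $G_{0,2}=O(\ln\tau)$ contributes nothing either. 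I would close the argument by verifying that for $|k|\ge2$ the logarithmic parts of the $K$‑functions are suppressed by a factor $\tau^{|k|}$ relative to their singular parts, and that the singular Bessel series advance only in integer powers of $\tau$, so that $G_{k,1}$ has no $\tau^{-1}\ln\tau$ term and no half‑integer singularity; this control of the logarithmic and branch corrections near $\tau=0$ is the main obstacle. With it in hand, reading off the $\tau^{-1}$ coefficient gives the stated formula and $w_k^0=w_k(0,\cdot)$ identifies the integral.
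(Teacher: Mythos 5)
Your proposal is correct and follows essentially the same route as the paper: the same decomposition $\hat\omega_k=G_{k,1}+G_{k,2}$, the same small-argument asymptotics of $I_\nu$ and $K_\nu$, and the same case analysis yielding a genuine simple pole only in $G_{k,1}$ for $|k|\ge 2$ with coefficient $\frac{2(|k|-1)r_0^{2|k|-2}}{r^{|k|}}\int_{r_0}^\infty s^{-|k|+1}w_k(0,s)\,\ds$. Your explicit justification that the $\tau^{-1}(\ln\tau)^{-1}$ term for $|k|=1$ and the half-integer and logarithmic corrections contribute nothing to the contour integral is a welcome sharpening of a point the paper leaves implicit.
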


\begin{proof}Without loss of generality suppose $k \geq 0$. We will use the asymptotic form for small arguments $0 < |z| \leq \sqrt{k+1}$ \cite{BE}:
\begin{align*}
 I_{k }(z) & \sim {\frac {1}{\Gamma (k +1)}}\left({\frac {z}{2}}\right)^{k },\\K_{k }(z)&\sim {\begin{cases}-\ln \left({\dfrac {z}{2}}\right)-\gamma & {\text{if }}k =0,\\{\frac {\Gamma (k )}{2}}\left({\dfrac {2}{z}}\right)^{k }&{\text{if }}k >0,\end{cases}}
\end{align*}
where $\gamma$ is the Euler-Mascheroni constant.

For $k=0$:
$$
G_{k,1}(\tau,r) \sim  \frac {\tau r_0 \left (\ln(\sqrt \tau r /2) + \gamma\right)} {4} \int_{r_0}^\infty \left ( \ln(\sqrt \tau s /2) + \gamma  \right ) w_0(0,s) s \ds.
$$

For $k=1$:
$$
G_{k,1}(\tau,r) \sim  - \frac 1 {\tau r \left (\ln(\sqrt \tau r_0/2) + \gamma \right )  } \int_{r_0}^\infty  w_1(0,s) \ds.
$$

In both cases $k\in \{0,1\}$ the residues are equal to zero. Then only for $k>1$ $G_{k,1}(\tau,r)$ has nonzero residue:
$$
G_{k,1}(\tau,r) \sim  \frac {2(k-1) r_0^{2k-2}} {\tau r^k} \int_{r_0}^\infty s^{-k+1} w_k(0,s) \ds
$$
and
$$
\operatorname{res}\limits_{\tau=0}[G_{k,1}(\tau,r)]  = \frac {2(k-1) r_0^{2k-2}} {r^k} \int_{r_0}^\infty s^{-k+1} w_k(0,s) \ds.
$$

Now we calculate the residues of $G_{k,2}(\tau,r)$. From
\begin{align*}
K_{k }(\sqrt \tau r)I_{k }(\sqrt \tau s)&\sim {\begin{cases} -\ln \left({\dfrac {\sqrt \tau r}{2}}\right)-\gamma & {\text{if }}k =0,\\ \frac s{2kr}&{\text{if }}k >0\end{cases}}
\end{align*}
the residues of $G_{k,2}(\tau,r)$ are equal to zero. And $$\operatorname{res}\limits_{\tau=0}[\hat \omega_k(\tau,r)] = \operatorname{res}\limits_{\tau=0}[\hat G_{k,1}(\tau,r)].$$ 
\end{proof}

Since (\ref{besselrhs}) is invariant to the change $k$ to $-k$, the solution of the heat equation (\ref{maineqw}),  (\ref{initw}), (\ref{boundinfw}) with Robin condition (\ref{noslip:robin_bound}) is represented as
\begin{align}\label{heateqdirectsol}
&w(t,r,\varphi) = \nonumber 
\sum_{k=-\infty}^\infty e^{ik\varphi} \int_0^\infty  \frac{  R_{|k|,|k|-1}(\lambda, r)} 
{J_{|k|-1}(\lambda r_0)^2 + Y_{|k|-1}(\lambda r_0)^2} \\ 
&\times \left ( \int_{r_0}^\infty R_{|k|,|k|-1}(\lambda, s)  w_k(0,s) s \mathrm{d} s \right )    e^{-\lambda^2 t} \lambda \mathrm{d}\lambda \nonumber \\
&+ \sum \limits_{k=-\infty,|k| \geq 2}^\infty e^{ik\varphi} \frac {2(|k|-1) r_0^{2|k|-2}} {r^{|k|}} \int_{r_0}^\infty s^{-|k|+1} w_k(0,s) \ds,
\end{align}
where
$$
R_{k,k-1}(\lambda,s) = J_{k}(\lambda s)Y_{k-1}(\lambda r_0) - Y_{k}(\lambda s)J_{k-1}(\lambda r_0). 
$$

\begin{thm} \label{thmheateq}
Let $w_k^0(r) \sqrt r \in L_1(r_0,\infty)$ for $k \in \ZM$. Then the solution $w(t,\bx)$ of (\ref{maineqw}), (\ref{initw}),(\ref{boundinfw}), (\ref{noslip:robin_bound}) is given by (\ref{heateqdirectsol}) with $w_k(0,s) = w_k^0(s)$.
\end{thm}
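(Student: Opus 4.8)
The plan is to treat formula (\ref{heateqdirectsol}) not as something still to be discovered — it has already been produced by the preceding Laplace-transform and residue computation — but as a candidate whose status as a genuine solution must be certified. I would organize the verification into three blocks: convergence of the representation, fulfilment of the equation together with the two boundary conditions, and recovery of the initial datum. The last of these is the real content of the theorem.

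First I would check that every integral in (\ref{heateqdirectsol}) converges under the hypothesis $w_k^0(r)\sqrt r \in L_1(r_0,\infty)$. The inner transform $\int_{r_0}^\infty R_{|k|,|k|-1}(\lambda,s) w_k^0(s)\, s\, \ds$ is absolutely convergent, because the large-argument asymptotics of $J_{|k|},Y_{|k|}$ give $R_{|k|,|k|-1}(\lambda,s)\sqrt s = O(1)$ in $s$ for fixed $\lambda>0$, so that the integrand factors as $[R_{|k|,|k|-1}(\lambda,s)\sqrt s]\,[w_k^0(s)\sqrt s]$ and is dominated by $C_\lambda\,|w_k^0(s)\sqrt s|\in L_1$. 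For the outer $\lambda$-integral the factor $e^{-\lambda^2 t}$ forces rapid decay at infinity for every fixed $t>0$, while near $\lambda=0$ the weight $1/(J_{|k|-1}^2(\lambda r_0)+Y_{|k|-1}^2(\lambda r_0))$ is controlled by the small-argument behaviour of $Y_{|k|-1}$; the residue correction $r^{-|k|}\int_{r_0}^\infty s^{-|k|+1}w_k^0(s)\ds$ is finite by the same weighted-$L_1$ bound. Uniform convergence in $r$ on compact subsets of $(r_0,\infty)$ then legitimizes differentiation under the integral sign.

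Second, I would verify that (\ref{heateqdirectsol}) solves the problem. Each building block $e^{-\lambda^2 t + ik\varphi}R_{|k|,|k|-1}(\lambda,r)$ satisfies the heat equation, since $J_{|k|}(\lambda r)$ and $Y_{|k|}(\lambda r)$ solve the order-$|k|$ Bessel equation and hence $e^{-\lambda^2 t+ik\varphi}$ times either of them is a solution of the heat equation, exactly as noted after (\ref{int:explsol}); the residue corrections $r^{-|k|}e^{ik\varphi}$ are time-independent harmonics annihilated by $\partial_t-\Delta$. The Robin condition (\ref{noslip:robin_bound}) holds termwise: by construction $R_{|k|,|k|-1}$ is the kernel for which $Z_{|k|,r_0}[R_{|k|,|k|-1}(\lambda,\cdot)]=0$ — this is precisely why the index $l=k-1$ was selected — and one checks directly that $r_0\,\partial_r(r^{-|k|})|_{r=r_0}+|k|\,r_0^{-|k|}=0$, so the corrections satisfy the condition too. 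Decay at infinity (\ref{boundinfw}) follows from $R_{|k|,|k|-1}(\lambda,r)=O(r^{-1/2})$ and $r^{-|k|}\to 0$.

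Third, and this is the crux, I would establish the initial condition $w(0,r,\varphi)=w_0(r,\varphi)$. Setting $t=0$ reduces the claim, Fourier mode by mode, to the corrected inversion identity for the associated Weber-Orr transform: for $|k|\in\{0,1\}$ one needs $W^{-1}_{|k|,|k|-1}[W_{|k|,|k|-1}[w_k^0]](r)=w_k^0(r)$ with no correction (the residue vanishes there by the residue lemma), whereas for $|k|\geq 2$ one needs the same inverse plus exactly the term $\tfrac{2(|k|-1)r_0^{2|k|-2}}{r^{|k|}}\int_{r_0}^\infty s^{-|k|+1}w_k^0(s)\ds$ to reconstitute $w_k^0$. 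The cleanest route is not to invoke Titchmarsh's identity (\ref{int:invidentity}) blindly — which is false for $l=k-1$ when $|k|\geq 2$ — but to read the identity off from the Laplace representation already constructed: since $\hat\omega_k(\tau,r)$ in (\ref{wksol}) is by construction the resolvent solution of (\ref{besselrhs}) with right-hand side $-w_k^0$, its inverse Laplace transform evaluated at $t=0^+$ returns $w_k^0(r)$, and the decomposition of that inverse transform into the branch-cut (Weber-Orr) integral plus the $\tau=0$ residue is nothing but the corrected inversion identity. I expect the main obstacle to be the rigorous justification of the $t\to 0^+$ limit and its interchange with the $\lambda$-integration, i.e. the removal of the $e^{-\lambda^2 t}$ regularization; this should require the weighted-$L_1$ hypothesis together with a bounded-variation argument in the spirit of Titchmarsh's proof of (\ref{int:invidentity}), applied here to the corrected kernel. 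Once this limit is controlled, summation over $k$ recovers $w_0(r,\varphi)$ and the theorem follows.
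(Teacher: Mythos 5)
Your proposal is correct and rests on the same machinery as the paper, but it is considerably more explicit than what the paper actually writes down: the paper's entire proof of Theorem \ref{thmheateq} is your first block --- it cites the large-argument asymptotics of $J_k$, $Y_k$ to conclude that the integrals in (\ref{heateqdirectsol}) are well defined, and then declares the theorem proved, leaving the fact that the formula solves the equation with the Robin condition (\ref{noslip:robin_bound}) entirely to the preceding Laplace-transform and contour-deformation derivation. Your second block (termwise verification that $Z_{|k|,r_0}[R_{|k|,|k|-1}(\lambda,\cdot)]=0$, and that $r^{-|k|}e^{ik\varphi}$ is a time-independent harmonic satisfying the same Robin condition) is a genuine addition the paper never states, and it checks out. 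Your third block inverts the paper's logical order: the paper derives the corrected inversion identity as a \emph{corollary} of Theorem \ref{thmheateq}, by letting $t\to0^+$ in (\ref{heateqdirectsol}), whereas you propose to establish the $t=0^+$ recovery first, reading it off from the resolvent structure of (\ref{wksol}), and to deduce the theorem from it; the two orderings are equivalent and neither is circular, since both ultimately ground the identity in the construction of $\hat\omega_k$ as the solution of (\ref{besselrhs}). You are also right to single out the interchange of the $t\to0^+$ limit with the $\lambda$-integration as the one genuinely delicate step: the paper asserts this limit in the corollary's proof without justification and without the bounded-variation hypothesis that Titchmarsh's identity (\ref{int:invidentity}) requires, so carrying out the Titchmarsh-style argument you sketch would make the statement more rigorous than the paper's own treatment.
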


\begin{proof}Using asymptotic form of the Bessel functions for large $|z|$ (\cite{BE},\cite{W}):
\begin{align*}J_{k }(z)={\sqrt {\frac {2}{\pi z}}}\left(\cos \left(z-{\frac {k \pi }{2}}-{\frac {\pi }{4}}\right)+e^{\left|\operatorname {Im} (z)\right|}\mathrm {O} \left(|z|^{-1}\right)\right),~\left|\arg z\right|<\pi ,\\Y_{k }(z)={\sqrt {\frac {2}{\pi z}}}\left(\sin \left(z-{\frac {k \pi }{2}}-{\frac {\pi }{4}}\right)+e^{\left|\operatorname {Im} (z)\right|}\mathrm {O} \left(|z|^{-1}\right)\right),~\left|\arg z\right|<\pi \end{align*}
follows that integrals in (\ref{heateqdirectsol}) are well defined and the theorem is proved. 
\end{proof}

Then instead of (\ref{int:invidentity}) we have the following invertibility relation of the associated Weber-Orr transform: 
\begin{cor}Let $f(r) \sqrt r \in L_1(r_0,\infty)$ for $k \in \NM \cup \{0\}$. Then the associated Weber-Orr transforms (\ref{int:weberorr}), (\ref{int:weberorrinv}) satisfy  almost everywhere 
\begin{align*}
f(r) - W^{-1}_{k,k-1}\left [W_{k,k-1} [f] \right ](r) = {\begin{cases} \frac {2(k-1) r_0^{2k-2}} {r^k} \int_{r_0}^\infty s^{-k+1} f(s) \ds,~k\in\NM \setminus\{1\}\\ 0,~k=0,1.\end{cases}}
\end{align*}
\end{cor}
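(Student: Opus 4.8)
The plan is to obtain this invertibility identity as an immediate specialization of Theorem \ref{thmheateq}: I would evaluate the heat-equation solution formula (\ref{heateqdirectsol}) at $t=0$ and isolate a single Fourier mode. Since the Corollary is stated for $k \in \NM \cup \{0\}$, which is exactly the range for which (\ref{heateqdirectsol}) was derived before the change $k \mapsto -k$, no symmetrization is required and the identity is read off directly.

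First I would set $t=0$ in (\ref{heateqdirectsol}), so that the exponential factor $e^{-\lambda^2 t}$ collapses to $1$. By Theorem \ref{thmheateq} the right-hand side of (\ref{heateqdirectsol}) represents the solution of the heat equation with initial datum $w_0$, and the inverse Laplace transform used in its derivation recovers this datum at $t=0$; hence the left-hand side equals $w_0(r,\varphi)=\sum_k w_k^0(r)e^{ik\varphi}$ almost everywhere. Equating Fourier coefficients in $e^{ik\varphi}$ (orthogonality) then isolates, for each fixed non-negative $k$ and with $f:=w_k^0$, the scalar identity
$$
f(r) = \int_0^\infty \frac{R_{k,k-1}(\lambda,r)}{J_{k-1}^2(\lambda r_0)+Y_{k-1}^2(\lambda r_0)}\left(\int_{r_0}^\infty R_{k,k-1}(\lambda,s)\,f(s)\,s\,\ds\right)\lambda\,\dlambda + C_k(r),
$$
where $C_k(r)$ is the residue correction term coming from the second sum in (\ref{heateqdirectsol}).

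Next I would recognize the two nested integrals as the composition of the associated Weber-Orr transforms: the inner integral is exactly $W_{k,k-1}[f](\lambda)$ by definition (\ref{int:weberorr}), and the outer integral is $W^{-1}_{k,k-1}[\,\cdot\,](r)$ by definition (\ref{int:weberorrinv}). Thus the first term is precisely $W^{-1}_{k,k-1}\left[W_{k,k-1}[f]\right](r)$. The correction term $C_k(r)$ is read off from the residue lemma: it vanishes for $k\in\{0,1\}$ and equals $\frac{2(k-1)r_0^{2k-2}}{r^k}\int_{r_0}^\infty s^{-k+1}f(s)\,\ds$ for $k\geq 2$. Rearranging the scalar identity into the form $f(r)-W^{-1}_{k,k-1}\left[W_{k,k-1}[f]\right](r)=C_k(r)$ yields exactly the claimed formula.

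The only genuinely delicate point is the pointwise sense of the $t\to 0$ recovery: the Weber-Orr inversion returns the normalized value $\tfrac12\bigl(f(r+0)+f(r-0)\bigr)$ rather than $f$ at jump discontinuities, which is precisely why the identity is asserted only almost everywhere. This is controlled by the hypothesis $f(r)\sqrt r\in L_1(r_0,\infty)$ together with the large-argument Bessel asymptotics already invoked in the proof of Theorem \ref{thmheateq}, which guarantee absolute convergence of the $\lambda$-integral, with the a.e.\ equality following from the Titchmarsh-type convergence of the transform. I expect this convergence justification to be the main obstacle, whereas the algebraic identification of the transforms and the final rearrangement are routine.
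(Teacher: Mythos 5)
Your proposal is correct and follows essentially the same route as the paper: the paper likewise takes the heat-equation solution with single-mode initial datum $f(r)e^{ik\varphi}$, reads off from (\ref{heateqdirectsol}) that the difference between $w(t,r,\varphi)$ and the double integral equals the residue correction term, and lets $t\to 0$ to recover $f$. The only cosmetic difference is that you start from a general datum and project onto one Fourier mode, whereas the paper plugs in a single mode from the outset.
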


\begin{proof}Let $w(t,\bx)$ be the solution of (\ref{maineqw}), (\ref{boundinfw}),  (\ref{noslip:robin_bound}) with initial datum $f(r)e^{ik\varphi}$. Then from (\ref{heateqdirectsol})
\begin{align*}
&w(t,r,\varphi) - 
 e^{ik\varphi} \int_0^\infty  \frac{  R_{k,k-1}(\lambda, r)} 
{J_{k-1}(\lambda r_0)^2 + Y_{k-1}(\lambda r_0)^2} \nonumber \\ 
&\times \left ( \int_{r_0}^\infty R_{k,k-1}(\lambda, s)  f(s) s \mathrm{d} s \right )    e^{-\lambda^2 t} \lambda \mathrm{d}\lambda \nonumber \\
&= \overline{I_{0,1}}(k) e^{ik\varphi} \frac {2(k-1) r_0^{2k-2}} {r^{k}} \int_{r_0}^\infty s^{-k+1} f(s) \ds,
\end{align*}
where $\overline{I_{0,1}}(k) = \begin{cases} 1,~k\in\NM \setminus\{1\} \\ 0,~k =0,1 \end{cases}$. Going to limit $t \to 0$ we will obtain the required relation.
\end{proof}

Finally, we have
\begin{thm} Let vector field $\bv_0(\bx)$ satisfies (\ref{freediv}), (\ref{bound}), (\ref{boundinf}), (\ref{zerocirculation}), $\curl \bv_0(\bx)$ $ \in L_1(B_{r_0})$, and its Fourier series as well as ones for vorticity converges and coefficients $w_k^0(r)$ satisfy $w_k^0(r) \sqrt r \in L_1(r_0,\infty)$, $k \in \ZM$. Then the solution $w(t,\bx)$ of (\ref{maineqw})-(\ref{boundinfw}) is expressed as
\begin{align*}
w(t,r,\varphi) = \nonumber 
\sum_{k=-\infty}^\infty e^{ik\varphi} \int_0^\infty  \frac{  R_{|k|,|k|-1}(\lambda, r)} 
{J_{|k|-1}(\lambda r_0)^2 + Y_{|k|-1}(\lambda r_0)^2} \\ \times \left ( \int_{r_0}^\infty R_{|k|,|k|-1}(\lambda, s)  w_k^0(s) s \mathrm{d} s \right )  e^{-\lambda^2 t} \lambda \mathrm{d}\lambda.
\end{align*}
\end{thm}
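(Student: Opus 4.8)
The plan is to assemble three results already in hand: the reduction of the no-slip condition to the Robin condition (Theorem \ref{thmrobincond}), the explicit solution of the Robin problem (Theorem \ref{thmheateq}, formula (\ref{heateqdirectsol})), and the integral constraint (\ref{noslipcondintegral}) produced by the Biot-Savart analysis. The only discrepancy between (\ref{heateqdirectsol}) and the asserted formula is the residue correction term carried for $|k|\geq 2$, so the entire argument reduces to showing that this term vanishes under the no-slip hypothesis.

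First I would invoke Theorem \ref{thmrobincond}: since $\bv_0$ satisfies the no-slip condition (\ref{bound}) together with the standing regularity and zero-circulation assumptions, the Fourier coefficients $w_k(t,r)$ of the vorticity obey the Robin condition (\ref{noslip:robin_bound}) for all $t>0$. Consequently the full problem (\ref{maineqw})--(\ref{boundinfw}) is equivalent to the heat equation (\ref{omegaeqpolar}) supplied with the Robin boundary datum (\ref{noslip:robin_bound}), and Theorem \ref{thmheateq} applies verbatim with $w_k(0,s)=w_k^0(s)$. This yields the representation (\ref{heateqdirectsol}), which is the sum of the associated Weber-Orr transform term (exactly the claimed answer) and the correction term $\sum_{|k|\geq 2} e^{ik\varphi}\,\frac{2(|k|-1)r_0^{2|k|-2}}{r^{|k|}}\int_{r_0}^\infty s^{-|k|+1}w_k^0(s)\,\ds$.

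The decisive step is to kill this correction. The no-slip condition on the initial field $\bv_0$ forces the integral relations (\ref{noslipcondintegral}) evaluated at $t=0$, namely $\int_{r_0}^\infty s^{-|k|+1}w_k^0(s)\,\ds = i v_\infty \delta_{|k|,1}\,\sign(k)/r_0^{|k|-1}$. For every $|k|\geq 2$ the Kronecker delta $\delta_{|k|,1}$ vanishes, hence $\int_{r_0}^\infty s^{-|k|+1}w_k^0(s)\,\ds = 0$. But this is precisely the factor multiplying each summand of the correction series, so the whole correction collapses to zero and (\ref{heateqdirectsol}) reduces to the stated formula.

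I expect the main subtlety not to reside in the cancellation, which is immediate, but in confirming that the stated hypotheses genuinely deliver the constraint (\ref{noslipcondintegral}) for the \emph{initial} coefficients. One must check that $\curl \bv_0 \in L_1(B_{r_0})$ together with $w_k^0(r)\sqrt{r}\in L_1(r_0,\infty)$ places each $w_k^0$ in the admissible class (\ref{BS:cond}), so that the Biot-Savart representation and its boundary evaluation are legitimate and the no-slip condition (\ref{bound}) for $\bv_0$ indeed translates into (\ref{noslipcondintegral}) at $t=0$. Granting this, which the earlier Biot-Savart analysis provides, the proof is complete; one may add the remark that by the invariance Lemma the vanishing integral is preserved for all $t>0$, consistent with the no-slip condition persisting in time.
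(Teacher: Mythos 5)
Your proposal is correct and follows essentially the same route as the paper: verify that the hypotheses place the $w_k^0$ in the class (\ref{BS:cond}), invoke Theorems \ref{thmrobincond} and \ref{thmheateq} to obtain the representation (\ref{heateqdirectsol}), and then use (\ref{noslipcondintegral}) with $\delta_{|k|,1}=0$ for $|k|\geq 2$ to annihilate the residue correction term. Your extra attention to checking that the initial data genuinely satisfy (\ref{noslipcondintegral}) is a welcome elaboration of a step the paper passes over quickly.
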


\begin{proof} From $\curl \bv_0(\bx)$ $ \in L_1(B_{r_0})$ follows
$w_0^0(r) r \in L_1(r_0,\infty)$ and (\ref{BS:cond}) is satisfied for $k = 0$. From $w_k^0(r) \sqrt r \in L_1(r_0,\infty)$ follows (\ref{BS:cond}) for $k \neq 0$. Then from Theorems \ref{thmrobincond}, \ref{thmheateq} we have the formula (\ref{heateqdirectsol}) for $w(t,\bx)$. And in virtue of (\ref{noslipcondintegral}) the last term in (\ref{heateqdirectsol}) vanishes. 
\end{proof}

~\\~ 
A.\,V.~Gorshkov\\
Lomonosov Moscow State University,\\ 
Leninskie Gory, Moscow, 119991, \\Russian Federation\\
alexey.gorshkov.msu@gmail.com\\

\end{document}